\renewcommand{\hat}{\widehat}
\renewcommand{\tilde}{\widetilde}
\renewcommand{\bar}{\overline}
\newtheorem{example}{Example}
\newtheorem{theorem}{Theorem}
\newtheorem{lemma}{Lemma}
\newtheorem{proposition}[lemma]{Proposition}
\theoremstyle{definition}
\newtheorem{definition}{Definition}
\theoremstyle{remark}
\newtheorem{remark}{Remark}
\crefname{assumption}{Assumption}{Assumptions} 
\crefname{lemma}{Lemma}{Lemmata}
\crefname{theorem}{Theorem}{Theorems}
\crefname{proposition}{Proposition}{Propositions}
\crefname{corollary}{Corollary}{Corollaries}
\crefname{claim}{Claim}{Claims}
\crefname{algorithm}{Algorithm}{Algorithms}
\crefname{definition}{Definition}{Definition}
\crefname{remark}{Remark}{Remarks}
\crefname{figure}{Figure}{Figures}
\crefname{section}{Section}{Sections}
\crefname{example}{Example}{Examples}
\crefname{equation}{}{}
\crefname{table}{Table}{Tables}
\newcommand{\R}{\mathbb{R}}
\newcommand{\Z}{\mathbb{Z}}
\newcommand{\B}{\{0,1\}}
\newcommand{\rnd}{\operatorname{rnd}}
\newcommand{\norm}[1]{\left\Vert#1\right\Vert}
\newcommand{\bfp}{{\mathbf{p}}}
\newcommand{\bfu}{{\mathbf{u}}}
\newcommand{\bfv}{{\mathbf{v}}}
\newcommand{\bfw}{{\mathbf{w}}}
\newcommand{\bfx}{{\mathbf{x}}}
\newcommand{\bfy}{{\mathbf{y}}}
\newcommand{\bfz}{{\mathbf{z}}}
\colorlet{myClr}{black}
\newcommand{\Rq}[1][]{ \textcolor{myClr} {\mathbf{R}_{#1}} }
\newcommand{\Qq}[1][]{ \textcolor{myClr} {\mathbf{Q}_{#1}} }
\newcommand{\Qy}{ \Qq[\bfy] }
\newcommand{\bq}[1][]{ \textcolor{myClr} {\mathbf{b}_{#1}} }
\newcommand{\Cq}[1]{ \textcolor{myClr} {\mathbf{C}_{#1}} }
\newcommand{\Cx}{ \Cq{\bfx} }
\newcommand{\Cy}{ \Cq{\bfy} }
\newcommand{\dq}[1][]{ \textcolor{myClr} {\mathbf{d}_{#1}} }
\newcommand{\dx}{ \dq[\bfx] }
\newcommand{\dy}{ \dq[\bfy] }
\newcommand{\fq}[1][]{ \textcolor{myClr} {f_{#1}} }
\newcommand{\fx}{ \fq[\bfx] }
\newcommand{\fy}{ \fq[\bfy] }
\newcommand{\hq}[1][]{ \textcolor{myClr} {h_{#1}} }
\newcommand{\hx}{ \hq[\bfx] }
\newcommandx{\gq}[2][1={},2={}]{ \textcolor{myClr} {g^{#2}_{#1}} }
\newcommand{\gx}{ \gq[\bfx] }
\newcommand{\gy}{ \gq[\bfy] }
\newcommand{\pq}[1][]{ \textcolor{myClr} {\bfp_{#1}} }
\newcommand{\px}{ \dq[\bfx] }
\newcommand{\py}{ \pq[\bfy] }
\newcommand{\sx}{ 0 }
\newcommand{\sy}{ 0 }
\newcommand{\Dq}[1][]{ \textcolor{myClr} {D_{#1}} }
\newcommand{\Dy}{ \Dq[\bfy] }
\newcommandx{\proxn}{\textcolor{black}{\pi}}
\newcommandx{\prox}[2][1={\Qq},2={}]{\textcolor{black}{\bar{\proxn_{#2}} \left( #1 \right) }}
\newcommandx{\vecN}[1][1={}]{\begin{pmatrix}#1\end{pmatrix}}
\newcommandx{\xyVec}[2][1={},2={}]{\vecN{#1x#2\\#1y#2}}
\newcommandx{\ellip}[2][1={\gamma},2={u}] {\textcolor{Orange}{\mathcal{E}_{#1, #2}}}
\newcommand{\Flt}[1]{\textcolor{black}{ \phi _{ #1 }}}
\newcommand{\om}{\star}       
\newcommand{\am}{\dagger}     
\newcommandx{\poDiff}[1][1={\infty}]{\textcolor{black}{\omega_{#1}}}
\newcommandx{\poDiffBest}[1][1={\infty}]{\textcolor{black}{\bar{\omega_{#1}}}}
\newcommand{\SSI}{\textsc{Subset-Sum-Interval}\xspace}
\newcommand{\sigpt}{\Sigma_2^p}
\title{\Huge Proximity-based approximation algorithms for integer bilevel programs}
\author[1]{Sriram Sankaranarayanan \thanks{srirams@iima.ac.in}}
\author[1]{V. Shubha Vatsalya \thanks{phd23shubhav@iima.ac.in}}
\affil[1]{Operations and Decision Sciences, Indian Institute of Management Ahmedabad}
\date{}
\begin{document}
\maketitle
\begin{abstract}
We primarily consider bilevel programs where the lower level is a convex quadratic minimization problem under integer constraints
  We show that it is $\sigpt$-hard to decide if the optimal objective for the leader is lesser than a given value. 
  Following that, we consider a natural algorithm for bilevel programs that is used as a heuristic in practice. 
  Using a result on proximity in convex quadratic minimization, we show that this algorithm provides an additive approximation to the optimal objective value of the leader.
  The additive constant of approximation depends on the flatness constant corresponding to the dimensionality of the follower's decision space and the condition number of the matrix $\Qq$ defining the quadratic term in the follower's objective function. 
    We show computational evidence indicating the speed advantage as well as that the solution quality guarantee is much better than the worst-case bounds.
    We extend these results to the case where the follower solves an integer linear program, with an objective perfectly misaligned with that of the leader.
    Using integer programming proximity, we show that a similar algorithm, used as a heuristic in the literature, provides additive approximation guarantees. 
\end{abstract}

\medskip
\medskip

\section{Introduction}
Bilevel programs model sequential decision-making in strategic games involving two players: the \emph{leader}, who makes decisions first knowing how the second player will respond, and the \emph{follower}, who responds to the leader's decisions.
Both players aim to maximize their own objectives subject to their own constraints. 
The follower has the benefit of \emph{observing} and knowing the decision made by the leader and responding to that; thus, the follower solves a traditional optimization problem. 
In contrast, the leader has to \emph{anticipate} the follower's response and make a decision to maximize their own objective function, given the follower's response.
Even the simplest version of the problem, in which both the leader and the follower solve a linear program, is known to be $NP$-complete \citep{jeroslow_1985}.

Despite the difficulty in solving bilevel programs, they have gathered interest due to a wide range of applications \citep{franceschi18a,caprara_study_2014,Caprara2016,Shen2012,Zhang2018,Bracken1974,Brown2006,Cote2003}.
Given the wide interest in solving bilevel programs, there has also been a recent explosion in the number of algorithms proposed to solve bilevel programs. 
While many of the algorithms are designed to solve certain special subcategories of bilevel programs, there have also been a few general-purpose solvers. 
Recently, \citet{Fischetti2017} provided an algorithm to solve linear bilevel programs with linear and integer constraints for both the leader and the follower.

When both the leader and the follower have linear and integer constraints, with linear objectives, the problem belongs to a complexity class called $\sigpt$ and is \emph{complete} for this class of decision problems. 
In other words, it is conjectured that given oracle access to solve $NP$-hard problems, one would need to use the oracle exponentially many times, or input a problem to the oracle that is exponentially large in size, to solve the bilevel program.
This gives a strong indication that when integer variables are involved, bilevel programs are really difficult to solve, even with the aid of off-the-shelf integer-programming ($NP$-complete problem) solvers like Gurobi \citep{GurobiOptimization2023} or CPLEX \citep{cplex2009v12}.

A typical approach in the literature, when problems can be provably hard to solve, is to develop \emph{approximation algorithms}.
These are algorithms which has a much better run-time guarantee, but solve the problem approximately, \emph{i.e., } the solution obtained is not guaranteed to be optimal.
However, the error induced by the approximation is bounded, meaning that there is a quality guarantee on the solution obtained.
More formally, typical approximation algorithms developed for $NP$-complete problems run in polynomial time and provide a solution of provable quality.
Since, the problem of interest here are known to be $\sigpt$-hard, we develop approximation algorithms which use an oracle to solve $NP$-complete problems, but only calls them at most polynomially many times.
This approach is also practical because standard solvers like Gurobi \citep{GurobiOptimization2023}, CPLEX \citep{cplex2009v12} etc., can efficiently solve a wide range of $NP$-complete problems of reasonable sizes, making them suitable \emph{oracles}. 

In this spirit, we provide the following contributions. 
First, we extend the $\sigpt$-hardness results known in the literature. 
We already know that if the follower optimizes a linear function with integer as well as \emph{linear} constraints, the problem is $\sigpt$-hard.  
    One can readily notice that if the follower has no linear constraints but retains a linear objective, the follower's problem becomes trivially unbounded. 
    Thus, the next interesting case involves having no linear constraints, but only a convex-quadratic objective.
    Therefore,  we consider a family of bilevel programs where the follower minimizes a convex-quadratic function subject only to integer constraints. 
    The leader has a linear objective function and linear constraints. 
    We prove that deciding if the leader can achieve an objective $\gamma$ in this case is $\sigpt$-hard. 

    Next, we consider a very natural algorithm for bilevel programs where the follower has integer constraints: (1) Solve the bilevel program after relaxing the follower's integer constraints; (2) Freeze the leader's optimal solution for this relaxed version; (3) Solve the follower's problem optimally; (4) Return the obtained leader-follower solution pair. 
    This algorithm is considered a fairly standard heuristic in the literature. 
    For example, \citet{Caprara2016} uses this for their algorithm to solve bilevel knapsack problems while \citet{Fischetti2018} extends it for generalized interdiction problems. 
    Nevertheless, for completeness, we state this algorithm formally in \cref{alg:apx}. 
        The algorithm relies on the relative ease of solving the follower's problem {\em without} the integer constraints. 
    This is true especially when convexity or other such properties hold for the follower's relaxed optimization problem, and thus KKT condition-based and other reformulation techniques make the problem solvable. 
    We show that this algorithm provides an additive approximation guarantee for the leader's objective function.
    Our results provide a theoretical explanation for \emph{why} these heuristics work well in practice, and also provide bounds and guarantees on the quality of the solution obtained.

The approximation guarantees we provide are based on the concept of \emph{proximity}.
Proximity, typically in the context of integer programs, refers to the maximum distance between an integer optimal solution and the optimal solution of the continuous relaxation. 
Although definitions vary slightly, this concept is well-studied in the theoretical integer programming literature. 
All our approximation results depend on proximity guarantees for various types of problems, and our bounds will improve as there are advances in the theory of proximity.

\section{Literature Review}
Bilevel programs have a wide range of applications. 
For example, \citet{franceschi18a} propose using bilevel programs for hyperparameter optimization in machine learning. 
Many interdiction problems naturally take the form of a bilevel program \citep{caprara_study_2014,Caprara2016,Shen2012,Zhang2018}. 
Naturally, this leads to applications in defense. 
\citet{Bracken1974} suggest various defense and security applications of bilevel programs, which were further extended in \citet{Brown2006} for defending critical infrastructure. 
\citet{Cote2003} use bilevel programs to solve a pricing problem in the airline industry. 

Theoretical analysis of bilevel programs have received significant attention in the literature starting from the seminal work of \citet{Jeroslow1985}, who showed that bilevel programs are $NP$-complete.
Despite the $NP$-hardness established by Jeroslow, algorithms to solve the bilevel program have been available for a long time.
\citet{edmunds1992algorithm} present an algorithm to find the exact solution for a bilevel program where the leader has mixed integer variables and a convex nonlinear objective function, while the follower has continuous variables with a convex quadratic objective function. 

Mixed-integer bilevel linear programs are known to be $\sigpt$-hard. 
\citet{caprara_study_2014} show that several restricted versions of these programs are already $\sigpt$-hard. 
Therefore, polynomial-time algorithms for solving mixed-integer bilevel programs, even with oracle access to solve $NP$-complete problems, are unlikely to exist.

Given the strong negative results,the literature review is divided into three parts. First, we discuss enumerative algorithms with minimal guarantees on the worst-case run time, but providing exact solutions. Second, we explore approximation algorithms offering provable solution quality, and with guarantees on the worst-case run time. 
Finally, we discuss fixed-parameter tractable algorithms for bilevel programs. 

\citet{Fischetti2017} propose an exact solution using branch and cut for mixed-integer bilevel programs with linear objectives and constraints for both the leader and the follower. 
\citet{Fischetti2018a} develop intersection cuts, a family of valid inequality originally created by \citet{Balas1971} for integer programs, and show that they can be used to solve mixed-integer bilevel programs much faster. 
On a similar note, \citet{audet2007new,tang2016class,tahernejad2020branch,denegre2009branch,xu2014exact} also develop algorithmic frameworks that solve a wide range of mixed-integer bilevel linear problems. 
More recently, \citet{Gaar2023} provide valid inequalities for mixed-integer \emph{nonlinear} bilevel programs based on a disjunctive cut formulation. 
These algorithms primarily operate within a branch-and-cut framework for mixed-integer bilevel programs. 
Complementarily, \citet{lozano2017value} developed an exact finite algorithm to solve bilevel mixed-integer programs with nonlinear constraints and objective dependent on both leader and follower variables. The leader's variables are integral, while the follower can have both integer and continuous variables.  
Their approach is based on iteratively approximating the value function of the follower, using simultaneous row and column generation. 
\citet{anandalingam1990solution} use penalty functions to develop an exact algorithm to solve linearly constrained Stackelberg problems. 
\citet{caramia2015enhanced} propose two new enhanced exact algorithms to solve discrete (linear) bilevel programs, \emph{i.e., }bilevel programs where the leader and the follower have only integral variables.
\citet{Wang2017} is another exact algorithm based on generalized branching to solve discrete linear bilevel programs.

Bilevel knapsack problems, a subset of bilevel linear programs, are of particular interest. 
\citet{Caprara2016} developed an algorithm that yields the exact solution for a bilevel integer program where the leader's decision interdicts a subset of knapsack items for the follower. 
\citet{brotcorne2009dynamic} presented a dynamic programming algorithm that solves a bilevel knapsack problem where the leader determines the capacity of the knapsack and the follower solves the knapsack problem. 
\citet{ghatkar2023solution} adapted two enumerative algorithms to solve a specific type of mixed-integer bilevel knapsack problems where the leader has continuous variables and a non-linear interaction with the follower's variables, which are both continuous and discrete.

When structured nonlinearities are allowed, the literature focsuses more towards problems that typically have convex nonlinearities in the follower's objective, but not any integer constraints on the follower. 
For example, recently, \citet{Byeon2022} provide an exact solution for bilevel programs where the follower solves a second-order conic program, using the Bender's decomposition method. 
\citet{mitsos2010global} created an algorithm to find the global solution of nonlinear bilevel mixed-integer programs using a convergent lower bound.
\citet{Kleinert2021} developed a cutting plane algorithm based on multi and single tree outer approximation to find approximate solutions to bilevel programs which have integer variables for the leader but the follower solves a convex quadratic program.  
\citet{Qi2024} consider competitive facility location problems, which have nonlinearity in the lower level. They provide various algorithms to solve the problem exactly as well as approximately. 

There has been relatively less work on approximation algorithms for bilevel programs. 
For the special case of bilevel knapsack problems, \citet{Chen2013} provides various constant factor approximation algorithms (factor of $2+\varepsilon$, $1+\sqrt{2}+\varepsilon$ for different versions).
For bilevel knapsack problems with interdiction constraints, \citet{caprara_study_2014} provide a \emph{polynomial-time approximation scheme} (PTAS). 
\citet{Qi2024}, in their study of bilevel programs under competitive facility location, provide an approximation algorithm, which makes calls to an integer programming solver, \emph{i.e., } an oracle to solve an $NP$-complete problem. 

Finally, 
\citet{koppe2010parametric} used results from parametric integer programming to develop fixed-parameter tractable algorithms to solve a class of bilevel integer linear programs.

\section{Problem Formulation}
 Bilevel programs are problems with two players - the leader and the follower.
 The leader makes a decision \emph{anticipating} the follower's response, while the follower makes a decision after \emph{observing} the leader's decision.
The literature distinguishes between \emph{two} versions of the bilevel program -- the optimistic and the pessimistic versions -- depending on how the follower chooses a decision if they have multiple optimal responses to the leader. 
In the optimistic version, it is assumed that among multiple optimal solutions, the follower will choose the one which favors the leader the most, while in the pessimistic version, the follower will choose the one which hurts the leader the most. 
It is also common to call the leader's problem the upper-level problem and the follower's problem the lower-level problem.
We define both formally below. In the definitions, we use generic functions $\fx, \fy, \gx$ and $\gy$. We will make assumptions on these functions soon, as needed.
\begin{definition}[Integer Convex Bilevel Programs] 
        An Optimistic Integer Convex Bilevel Program  is a problem of the form 
 \begin{align}
   \min_{\bfx\in \Z^{n_x},\bfy \in \Z^{n_y}} \quad&:\quad \fx (\bfx,\bfy) &\text{s.t.} \nonumber \\
   \gx^i (\bfx) \quad&\leq\quad 0 \qquad \forall\ i \in \{1,\ldots,m_x\} \\
   \bfy \quad&\in\quad \arg \min_{\bfy \in \Z^{n_y}} \left\{ \fy (\bfx,\bfy) \,:\, \gy^i (\bfx,\bfy) \,\leq\, 0 \quad \forall\ i \in \{1,\ldots,m_y\} \right\} \tag{ICB-Gen-O} \label{eq:Sklbrg-Gen-O}
\end{align}
  A Pessimistic Integer Convex Bilevel Program  is a problem of the form 
 \begin{align}
   \min_{\bfx\in \Z^{n_x}} \max_{\bfy \in \Z^{n_y}} \quad&:\quad \fx (\bfx,\bfy) &\text{s.t.} \nonumber \\
   \gx^i (\bfx) \quad&\leq\quad \sx \qquad \forall\ i \in \{1,\ldots,m_x\} \nonumber \\
   \bfy \quad&\in\quad \arg \min_{\bfy \in \Z^{n_y}} \left\{ \fy (\bfx,\bfy) \,:\, \gy^i (\bfx,\bfy) \,\leq\, \sy \quad \forall\ i \in \{1,\ldots,m_y\} \right\} \tag{ICB-Gen-P} \label{eq:Sklbrg-Gen-P}
 \end{align}
 
Here $\fx, \fy, \gx^i$ for $i=1,\ldots,m_x$ and $\gy^i$ for $i=1,\ldots,m_y$ are convex functions of $(\bfx,\bfy)$. We call the player who decides $\bfx$ the leader, and the player who decides $\bfy$ the follower.
\end{definition}

In a bilevel program, using the notation as above, the leader chooses $\bfx \in \Z^{n_x}$ first. Observing the leader's choice,  the follower solves an optimisation problem parametrised in $\bfx$, to decide their variables $\bfy$. In turn, the leader's objective function and feasible set themselves are parametrised in $\bfy$, and they anticipate $\bfy$'s behaviour to choose $\bfx$.
Now, given a leader's decision $\bfx$, it is possible that the follower has multiple optimal solutions. 
If the leader can control or influence the follower to choose a decision that is to their liking, among the multiple optimal solutions that the follower has, it is said to be the optimistic version of the problem.
Equivalently, in the optimistic version, among the multiple optima, the follower chooses $\bfy$ which is the most favourable to the leader. 
On the other hand, if the leader has little to no influence on the follower, then the leader should possibly plan for the worst alternative that the follower might pick, giving rise to the pessimistic version of the problem.
So, in the pessimistic version,  we assume that the follower chooses the solution which is the least favourable to the leader. 
The solutions to each of the versions could be considerably different. 
The example below highlights this difference.
\begin{example} 
Consider the following problem
    \begin{subequations}
        \begin{align*}
            \min_{\bfx\in \Z} \max_{\bfy \in \Z} \quad&:\quad 100(\bfx-1)^2 + \bfy &\text{s.t.} \nonumber \\
   \bfy \quad&\in\quad \arg \min_{\bfy \in \Z} \left\{ \bfy^2 - \bfx\bfy \right\}
        \end{align*} 
    \end{subequations}
and
    \begin{subequations}
        \begin{align*}
            \min_{\bfx\in \Z} \min_{\bfy \in \Z} \quad&:\quad 100(\bfx-1)^2 + \bfy &\text{s.t.} \nonumber \\
   \bfy \quad&\in\quad \arg \min_{\bfy \in \Z} \left\{ \bfy^2 - \bfx\bfy \right\} 
        \end{align*} 
    \end{subequations}
The former is the pessimistic version of the problem, while the latter is the optimistic version of the problem. 
Observe that if the leader chooses an $\bfx$ that is different from $1$, then their objective value will grow fast. 
Thus, the leader always chooses $\bfx=1$.
Given $\bfx=1$, the follower has two optimal solutions: $\bfy=0$ and $\bfy=1$.
While the follower is indifferent between these choices, the leader prefers $\bfy=0$. Thus, the solution to the optimistic version is $(1, 0)$ with the leader's objective value of $0$, and the solution to the pessimistic version is $(1, 1)$ with the leader's objective value being $1$.
\end{example}

In this paper, while we are interested in a bilevel program where the follower has integer constraints in the problem, a useful sub-problem is the one where the integer constraints of the follower are relaxed. It is well known that if the follower's problem is relaxed, then the resulting problem is neither a relaxation nor a restriction of the original bilevel program. 
The following example demonstrates this. 
\begin{example}
    Consider the following bilevel program.
    \begin{subequations}
        \begin{align*}
            \min_{\bfx, \bfy \in \Z} \quad&:\quad 1000\bfx^2 + \bfy & \text{s.t.}\\
            \bfy \quad&\in\quad \arg \min_{\bfy \in \Z}
            \{ (3\bfy - 2)^2 + \bfx \bfy 
            \}
        \end{align*}
    Clearly, the leader chooses $\bfx = 0$. Suppose we relax the follower's problem. Given the leader's decision, the follower's optimal solution is $\bfy = \frac{2}{3}$. The corresponding objective value for the leader is $\frac{2}{3}$. 
    One can also easily observe that the optimal solution of the follower {\em with} the integer constraints, given the leader's decision,  is $\bfy = 1$. The corresponding objective value for the leader is ${1}$, which is worse for the leader, indicating that relaxing the integer constraints of the follower led to a worse outcome for the leader. 
        
        In contrast, consider the following problem.
        
        \begin{align*}
            \min_{\bfx, \bfy \in \Z} \quad&:\quad 1000\bfx^2 + \bfy & \text{s.t.}\\
            \bfy \quad&\in\quad \arg \min_{\bfy \in \Z}
            \{ (3\bfy - 1)^2 + \bfx \bfy 
            \}
        \end{align*}
    Clearly, the leader chooses $\bfx = 0$. Suppose we relax the follower's problem. Given the leader's decision, the follower's optimal solution is $\bfy = \frac{1}{3}$. The corresponding objective value for the leader is $\frac{1}{3}$. 
    One can readily observe that the optimal solution of the follower {\em with} the integer constraints, given the leader's decision,  is $\bfy = 0$. The corresponding objective value for the leader is ${0}$, which is better for the leader! 
    \end{subequations}
\end{example}

While it is clear that relaxing the integer constraints of the follower and solving the resultant (easier) bilevel program provides neither an upper bound nor a lower bound for the original bilevel program, it could still be useful in practice. 
We define this relaxed version of the bilevel program formally.
\begin{definition}[Follower relaxed version (FRV)]
    Given a pessimistic or optimistic version of the bilevel program, the follower relaxed version of the problem is the one where the integer constraints of the follower's problem are relaxed. In the FRV, we retain the pessimistic or optimistic assumptions as in the original bilevel program.
\end{definition}
For example, the FRV of \cref{eq:Sklbrg-Gen-O} is 
 \begin{align*}
   \min_{\bfx\in \Z^{n_x},\bfy \in \R^{n_y}} \quad&:\quad \fx (\bfx,\bfy) &\text{s.t.} \nonumber \\
   \gx^i (\bfx) \quad&\leq\quad 0 \qquad \forall\ i \in \{1,\ldots,m_x\} \\
   \bfy \quad&\in\quad \arg \min_{\bfy \in \R^{n_y}} \left\{ \fy (\bfx,\bfy) \,:\, \gy^i(\bfx,\bfy) \,\leq\, \sy \quad  \forall\ i \in \{1,\ldots,m_y\} \right\} 
\end{align*}

Typically, solving the FRVs of a bilevel program is likely to be easier than solving the original \cref{eq:Sklbrg-Gen-O} or \cref{eq:Sklbrg-Gen-P}. 
For example, we note that if $\fx$ and $ \fy$ are linear and the regions defined by $\gx(\bfx, \bfy) \le 0$ and $\gy(\bfx,\bfy) \le 0$ are polyhedra, then the FRVs are NP-complete, whereas the original \cref{eq:Sklbrg-Gen-P,eq:Sklbrg-Gen-O} are $\sigpt$-complete. 
Moreover, in the linear versions, there are integer-programming reformulations based on big-M, or reformulations into complementarity problems that can be solved readily by off-the-shelf integer programming solvers. 
However, analogous solvers for problems {\em with} integer constraints on the follower are yet to mature.
In some cases, the FRVs are likely to have polynomial-time approximation algorithms, whereas the original version of the problem is not known to have any such approximation algorithm.

In this paper, we discuss two variations of Integer Convex Bilevel Programs,  categorised based on how the leader and follower interact.
In the first variation, the leader's decision variable appears only in the follower's convex quadratic objective function, and the follower has no constraints except for the integer constraint. 
In the second variation, the leader's decision variable appears only in the follower's constraints. The follower has a linear objective function which is independent of the leader's decision. 
We construct feasible solutions to these specialised variations by temporarily relaxing the integer constraint on the follower (i.e., solving the FRV), and then prove that such solutions approximate the actual solutions in various settings.

\section{Follower with a convex quadratic objective}\label{sec:Quad}
In this section, we consider settings where the follower solves a convex quadratic problem with no constraints other than the integer constraint, while the leader has a linear objective function. Interaction between the leader and follower occurs exclusively through the objective function. 

\begin{definition}[Integer Convex Quadratic Bilevel Program]
  An Integer Convex Quadratic Bilevel Program is a variation of \cref{eq:Sklbrg-Gen-P} or \cref{eq:Sklbrg-Gen-O} 
 where the leader's objective $\fx(\bfx,\bfy) = \hx(\bfx) + \dx (\bfy)$ is a linear function; 
 the leader's feasible set is independent of the follower's variables, 
 i.e., $\gx(\bfx,\bfy) = \gx (\bfx)$; the follower's objective $\fy(\bfx,\bfy) = \frac{1}{2}\bfy^\top \Qy \bfy + \left( \Cy \bfx + \dy \right)^\top \bfy  $ is a (strictly) convex quadratic function with the leader's variable appearing only in the linear term, 
 and the follower has no constraints, i.e., $\gy(\bfx, \bfy) = -1$.
\end{definition}

The Pessimistic Integer Convex Quadratic Bilevel Program is of the form: 
\begin{align}
   \min_{\bfx\in \Z^{n_x}} \max_{\bfy \in \Z^{n_y}} \quad&:\quad \hx (\bfx) + \dx (\bfy) &\text{s.t.} \nonumber \\
   \gx^i (\bfx) \quad&\leq\quad \sx \quad \forall\ i \in \{1,\ldots,m_x\} \nonumber \\
   \bfy \quad&\in\quad \arg \min_{\bfy \in \Z^{n_y}} \left\{ \frac{1}{2}\bfy^\top \Qy \bfy + \left( \Cy \bfx \right)^\top \bfy + \dy^\top \bfy \right\} \tag{ICB-Quad-P} \label{eq:Sklbrg-Quad-P}
 \end{align}
And the Optimistic Integer Convex Quadratic Bilevel Program is of the form:

\begin{align}
   \min_{\bfx\in \Z^{n_x}, \bfy \in \Z^{n_y}} \quad&:\quad \hx (\bfx) + \dx (\bfy) &\text{s.t.} \nonumber \\
   \gx^i (\bfx) \quad&\leq\quad \sx \quad \forall\ i \in \{1,\ldots,m_x\} \nonumber \\
   \bfy \quad&\in\quad \arg \min_{\bfy \in \Z^{n_y}} \left\{ \frac{1}{2}\bfy^\top \Qy \bfy + \left( \Cy \bfx \right)^\top \bfy + \dy^\top \bfy \right\} \tag{ICB-Quad-O} \label{eq:Sklbrg-Quad-O}
 \end{align}

\subsection{Complexity}
First, we quantify the computational complexity of the pessimistic version of the Integer Convex Quadratic Bilevel Program. 
We prove in this section that the decision problem of determining if the leader's optimal objective value is $\gamma$ is $\sigpt$-hard.
This means that the problem is in the second level in the polynomial hierarchy of complexity classes. 
This further indicates that unless $NP=\sigpt$, which is a very unlikely situation in complexity theory, there are no algorithms that can run in polynomial time even with access to an oracle that solves $NP$-complete problems in polynomial time. 
In other words, unless very unlikely results in complexity theory hold, one might have to solve either an exponentially large $NP$-hard problem, or solve exponentially many $NP$-complete problems, in order to solve a $\sigpt$-hard problem. 
We formally state the result below.
\begin{theorem}\label{thm:QuadIntHard}
    The pessimistic version of the \em{Integer Convex Quadratic Bilevel Program} as stated in \cref{eq:Sklbrg-Quad-P} is $\sigpt$-hard.
\end{theorem}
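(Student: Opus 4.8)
The plan is to prove \cref{thm:QuadIntHard} by a polynomial-time reduction from the \emph{known} $\sigpt$-hard case in which the follower minimises a linear objective subject to linear \emph{and} integer constraints, i.e.\ from a pessimistic mixed-integer bilevel \emph{linear} program. After a routine normalisation I would take such an instance with follower problem $\arg\min_{y\in\Z^{n_y}}\{\,c^\top y : By \le b - Ax\,\}$, leader objective linear in $(x,y)$, and leader constraints depending on $x$ alone (the standard hardness constructions can be put in this form, with the follower's variables bounded, hence WLOG binary). Since the leader's $\min$ already supplies the existential quantifier and the pessimistic $\max$ over the follower's optimal set already supplies the universal quantifier, the entire burden of the reduction is \emph{local}: I must replace the follower's linear objective together with its linear constraints by a single \emph{unconstrained} strictly convex quadratic $\tfrac12 y^\top Q_y y + (C_y x + d_y)^\top y$, with the leader's variable entering only the linear term, so that the set of integer minimisers --- as a subset of the shared $y$-space --- is preserved.

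\textbf{The follower as a closest-lattice-point problem.} The key structural fact I would use is that, because $Q_y \succ 0$ and there are no constraints, minimising $\tfrac12 y^\top Q_y y + (C_y x + d_y)^\top y$ over $\Z^{n_y}$ is exactly the problem of finding the integer point closest, in the norm $\|\cdot\|_{Q_y}$, to the continuous centre $t(x) = -Q_y^{-1}(C_y x + d_y)$, which the leader's choice of $x$ moves affinely. Multiple follower optima therefore arise precisely as \emph{equidistant} nearest lattice points, and it is exactly this finite tie-set over which the pessimistic adversary ranges. The construction must choose $Q_y$, $C_y$ and $d_y$ so that, for the intended gadget values of $x$, this tie-set coincides with the optimal set of the original linear follower --- thereby transporting the universal quantifier --- while keeping $Q_y$ diagonally dominant (so the objective is genuinely strictly convex, and as a by-product of controlled condition number). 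One then sets the threshold $\gamma$ to the leader's objective value in the source instance and argues both directions: a leader strategy certifying the source answer ``$\le\gamma$'' lifts to one here because the two followers share optimal sets, and conversely any leader strategy here projects back, non-gadget choices of $x$ being excluded by the (unchanged) leader constraints.

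\textbf{Main obstacle.} The crux is this local conversion of the follower, and here a genuine structural gap must be bridged: in the linear follower the tie structure on which the universal quantifier rides lives on \emph{flat optimal faces}, whereas a strictly convex quadratic has \emph{no} flat faces and can tie only on finite sets of equidistant integer points. Consequently the inequalities $By \le b - Ax$ cannot be absorbed as a quadratic penalty in the naive way --- $\|(By-(b-Ax))_+\|^2$ is not quadratic --- and the feasibility logic, the ranking by $c^\top y$, and the precise multiplicity of optima must all be re-encoded through the \emph{geometry} of nearest lattice points of a single positive-definite form. Concretely, I expect the hard part to be designing $Q_y,C_y,d_y$ (exploiting the binary range of $y$ and, if needed, auxiliary integer coordinates) so that (i) every intended feasible-and-optimal point of the source follower is a nearest lattice point of $t(x)$, (ii) these points are exactly equidistant, so that the adversary's choice among them reproduces the source tie-set, and (iii) no spurious integer point is strictly or equally close. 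Establishing (iii) --- ruling out unintended nearby lattice points while simultaneously forcing the intended equidistances --- is where essentially all of the technical effort will go; the remaining correctness and polynomiality checks are routine.
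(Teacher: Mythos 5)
Your high-level plan — reduce from a known $\sigpt$-hard bilevel problem and transport the pessimistic adversary's tie-set into the arg-min set of an unconstrained strictly convex integer quadratic — is the right shape, and your diagnosis of the obstacle (flat optimal faces versus nearest-lattice-point ties, and the fact that $\|(By-(b-Ax))_+\|^2$ is not a quadratic) is accurate. But the proposal stops exactly where the proof has to begin: items (i)–(iii) of your "main obstacle" paragraph, which you yourself identify as carrying essentially all the technical effort, are left entirely unconstructed. As written, there is no gadget, no choice of $\Qy, \Cy, \dy$, and no argument that the required tie structure can be realised at all, so the reduction is not established.

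The paper closes this gap with three concrete devices that your outline is missing. First, any integer follower variable can be forced to be binary by adding the convex quadratic penalty $M^2(\bfy_i^2-\bfy_i)$, which vanishes exactly on $\{0,1\}$ over the integers and is at least $M^2$ elsewhere; these terms also contribute $M^2 I$ to the Hessian, which is what makes the overall quadratic form strictly convex. Second, a linear inequality such as $\bfx_i+\bfy_i\le 1$ is not penalised via a positive part; instead one introduces an auxiliary binary slack variable $\bfz_i$ and penalises the squared \emph{equality} residual $M(\bfx_i+\bfy_i-\bfz_i)^2$, which is a genuine convex quadratic. This is the missing answer to your observation that inequalities cannot be absorbed naively. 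Third, the paper does not attempt your generic transformation of an arbitrary linear follower (whose optimal face would have to be mapped onto an exactly equidistant set of lattice points — a requirement you give no reason to believe is achievable in general); it instead reduces from the specific $\sigpt$-complete problem \SSI, following the DNeg construction of \citet{caprara_study_2014}, and engineers the follower's tie-set to be the zero set of a single squared integer-valued linear form $(\sum_i(Q+2^i)\bfy^p_i+\sum_i Q\bfy^d_i+\sum_i q_i\bfy^o_i+\bfy_s-B)^2$. Ties then occur exactly on the affine slice where this form vanishes, the extra bit $\bfy_s$ lets the follower reach either $B$ or $B-1$ for the leader's objective, and the pessimistic assumption selects the worse value — which is precisely how the universal quantifier of \SSI is simulated. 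Without some substitute for these constructions, your proposal does not yet constitute a proof.
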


Complexity proofs are typically provided by reducing a different (hard) problem to the given problem. 
This way, if the given problem is solved ``fast'', then so is the original hard problem. 
Thus, using a contrapositive argument, we can say that if the original hard problem is unlikely to be solved fast, then the given problem is also unlikely to be solved fast.
Here, we show that a problem called \SSI, that is known to be $\sigpt$-hard can be reduced into the pessimistic version of the Integer Convex Quadratic Bilevel Program. 
To this end, we define \SSI and formally state its hardness below.
\begin{definition}[\SSI]
\SSI is the following decision problem:
\begin{quote}
    Given a sequence $q_1,...q_k$ of positive integers and two more positive integers $R$ and $r$, such that $r \le k$, decide if there exists $S$ such that $R \le S < R+2^r$, such that there does not exist a subset $I \subseteq \{1,2,\ldots,k\}$, $\sum_{i\in I}q_i = S$.
\end{quote}
\end{definition}

One can observe that if such an $S$ is given, checking that $\sum_{i\in I}q_i \neq S$ for every subset $I$ is already NP-hard, as it is an instance of the \textsc{Subset-Sum} problem. \citet{eggermont2013} showed that \SSI belongs to the complexity class $\sigpt$, and it is indeed complete for this class. We formally state this below.
\begin{proposition}[\citep{eggermont2013}]
    \SSI is $\sigpt$-complete. 
\end{proposition}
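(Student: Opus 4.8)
The plan is to establish the two halves of completeness separately: membership in $\sigpt$, which follows immediately from the quantifier structure of \SSI, and $\sigpt$-hardness, which requires a reduction from a canonical $\sigpt$-complete problem. For membership, I would first rewrite an instance as the assertion
\[
\exists\, S \in [R, R+2^r)\ \ \forall\, I \subseteq \{1,\ldots,k\}\ :\ \sum_{i\in I} q_i \neq S.
\]
The candidate $S$ is a nonnegative integer strictly below $R+2^r$, so it has a number of bits bounded by the size of the binary input and is therefore a polynomially-bounded existential certificate. The inner statement is exactly the complement of \textsc{Subset-Sum}: for a fixed $S$ it is a $\mathrm{coNP}$ predicate, since its negation ``$\exists I : \sum_{i\in I} q_i = S$'' is verifiable in polynomial time given the witness $I$. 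An existential quantifier over a polynomial certificate followed by a $\mathrm{coNP}$ predicate places \SSI in $\exists\cdot\mathrm{coNP}=\sigpt$.

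For hardness I would reduce from $\exists\forall$-\textsc{Sat} (equivalently $\Sigma_2$-\textsc{Sat}): decide whether $\exists x\,\forall y\,\phi(x,y)$ holds, where $x \in \B^{r}$, $y \in \B^{m}$, and the matrix $\phi$ is taken in DNF, so that the inner falsification problem ``$\exists y\,\neg\phi(x,y)$'' is exactly a CNF-\textsc{Sat} question in $y$ with $x$ fixed; this is a standard $\sigpt$-complete problem. The idea is to let the choice of $S$ inside the window $[R,R+2^r)$ encode the outer assignment $x$: the $r$ positions of $S-R$ carry the $r$ existential bits, which is precisely why the window has width $2^r$, and we maintain $r\le k$ since the gadget supplies far more than $r$ numbers. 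The multiset $\{q_i\}$ is built from the classical Karp encoding of $\phi$ into \textsc{Subset-Sum}, treating \emph{both} $x$ and $y$ as selectable variables and encoding the clauses of $\neg\phi$; the digits of the target lying in the window are used to \emph{pin} a particular outer assignment $x$, so that choosing $S$ fixes $x$ while the $y$-variables remain free for the subset to choose. The construction is arranged so that $S$ is expressible as a subset sum if and only if there exists $y$ with $\neg\phi(x,y)$. Consequently $S$ is \emph{not} a subset sum exactly when $\forall y\,\phi(x,y)$, and hence \SSI answers yes on the constructed instance precisely when $\exists x\,\forall y\,\phi(x,y)$.

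The step I expect to be the main obstacle is engineering the positional number encoding so that the $x$-selector block and the clause/variable blocks of the Karp gadget do not interfere through carries: achievability of the target $S$ must track the logical condition digit-by-digit, with no spurious subset sum arising from carries between blocks and no intended representation blocked by them. The standard remedy is to separate the blocks by a sufficiently large radix and to pad each digit column so that its column sum cannot overflow; verifying this no-carry invariant, together with confirming that every value in the window corresponds to a legitimate outer assignment $x$ and conversely, is where the detailed bookkeeping lies. Once the encoding is shown to be carry-free and polynomial in size, the equivalence above yields a polynomial-time many-one reduction, which together with the membership argument establishes that \SSI is $\sigpt$-complete.
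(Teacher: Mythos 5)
The first thing to note is that the paper does not prove this proposition at all: it is imported verbatim from \citet{eggermont2013}, so there is no in-paper proof to compare against, and the right benchmark is the argument in that reference. Measured against it, your outline is essentially the standard one. Your membership argument is correct and complete as stated: a candidate $S < R+2^r$ is a polynomially bounded certificate, the inner predicate is the complement of \textsc{Subset-Sum}, and $\exists\cdot\mathrm{coNP} = \sigpt$. Your hardness skeleton --- let the $r$ bits of $S-R$ range over the outer existential block, and let subset-sum reachability of $S$ encode the inner $\mathrm{coNP}$ check --- is exactly the mechanism behind the known proof. One presentational difference: the literature route factors through the intermediate problem \textsc{Generalized Subset Sum} ($\exists x\,\forall y:\ u^\top x + v^\top y \neq t$, known to be $\sigpt$-complete), for which the interval trick is almost immediate once the $u$-coefficients are powers of two, whereas you inline a Karp-style clause/variable digit encoding directly from $\Sigma_2$-\textsc{Sat}. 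Both are legitimate; the intermediate-problem route buys a much lighter carry analysis, since the number-theoretic gadgetry is delegated to the \textsc{Generalized Subset Sum} hardness proof.

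That said, as written your proposal is a plan rather than a proof, and the unexecuted part is precisely the load-bearing part. Two points in particular are named but not discharged. First, the pinning gadget is asymmetric: a set bit of $S$ can force the ``$x_i$ true'' selector item \emph{in}, but an unset bit only forces it \emph{out}; you still must force the complementary ``$x_i$ false'' item to be selected (via a variable column in the fixed high-order digits of $R$) so that clauses satisfied by $\neg x_i$ are creditable, and you must verify no carry can reach the low-order block --- otherwise values of $S$ in the window could be realized by unintended subsets, breaking the direction ``$S$ not a subset sum $\Rightarrow \forall y\,\phi(x_S,y)$.'' Second, you need $R$ chosen with its low $r$ bits zero so that every $S$ in the window shares the same high-order digits and the window is in bijection with outer assignments. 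Both issues are resolvable by standard radix-padding arguments, so I would call this a correct approach with an acknowledged but genuine gap at its core, not a flawed one.
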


Now, we are in a position to prove \cref{thm:QuadIntHard}.
The proof of this theorem is motivated by the proof in \citet[Sec 3.3]{caprara_study_2014} where a certain version (DNeg) of bilevel knapsack problem is $\sigpt$-hard.
While significant book-keeping is required for the proof, we use notations involving superscripts $p$, $d$ and $o$ to hint at the analogous forms of {\em padding items}, {\em dummy items} and {\em ordinary items} used in \citet[Sec 3.3]{caprara_study_2014}.
We also note that the proof in this paper does not trivially follow from the complexity results in \citet[Sec 3.3]{caprara_study_2014}, as the follower is not allowed to have any constraints besides the integer constraints in our context.

\begin{proof}[Proof of \cref{{thm:QuadIntHard}}]
To prove \cref{thm:QuadIntHard}, we show that an instance of \SSI can be rewritten as an instance of \cref{eq:Sklbrg-Quad-P} of size bounded by a polynomial in the size of the \SSI instance. Thus, if \cref{eq:Sklbrg-Quad-P} can be solved fast, then this reformulation can be used to solve the \SSI fast, which is unlikely. 
Moreover, we exclude \SSI instances with the trivial case where $R$ cannot be represented as a subset sum of $q_1$ to $q_k$.
This can be checked with an oracle to the \textsc{Subset-sum} problem (which is NP-complete), and does not alter the complexity of \SSI.

\paragraph{A technique to include binary follower variables.}
Given an instance of \SSI, we construct an instance of \cref{eq:Sklbrg-Quad-P}, where some of the follower variables are restricted to be binary. 
However, since the problem of interest, \cref{eq:Sklbrg-Quad-P}, does not allow any constraints on the follower except the integrality constraints, we need to appropriately penalize deviation from binary values.
A variable $\bfy_i$ can be made binary by including the term $M^2(\bfy_i^2-\bfy_i)$ in the objective function, for a sufficiently large value of $M$. For $\bfy_i = 0$ or $1$, this expression evaluates to $0$, and for any other integer value of $\bfy_i$, the expression evaluates to a large number which is at least $M^2$. Thus, by choosing large values of $M$, the follower could be forced to choose binary values for $\bfy_i$. Furthermore, adding the said term $M^2(\bfy_i^2-\bfy_i)$ never introduces any non-convexity in the problem, as $M(\bfy_i^2-\bfy_i)$ is a convex quadratic function of $\bfy_i$.
Thus for the rest of the proof, we arbitrarily allow any follower variable to be binary. 

\paragraph{Defining the instance of \cref{eq:Sklbrg-Quad-P}.}
First, given an instance of \SSI with $q_1,\ldots,q_k$, we define the constants $Q := \sum_{i=1}^k q_i$, and $B:=R+2^r-1+rQ$.
Next, we define $4r+2k+1$ variables controlled by the follower. 
The first set of variables notated as $\bfy^p \in \B^r$, $\bfy^d \in \B^r$, $\bfy^o \in \B^k$, and  $\bfy_s \in \B$ account for $2r+k+1$ variables.
The remaining $2r+k$ variables are notated as $\bfz^p \in \B^r, \bfz^d \in \B^r$ and $\bfz^o \in \B^k$.
For notational convenience, the variables $\bfy^p, \bfz^p, \bfy^d$ and $\bfz^d$ are indexed from $0$ to $r-1$. $\bfy^o$ and $\bfz^o$ are indexed from $1$ to $k$.
The leader has $2r+k$ variables  notated as $\bfx^p \in \B^r$  and  $\bfx^d \in \B^r$
The objective function of the follower is 
\begin{subequations}
\begin{gather}
 \fy^{\text{binary}}(\bfy) + 
    \left (
\sum_{i=0}^{r-1} \left(
(Q+2^i)\bfy^p_i + Q\bfy^d_i
\right) + 
\sum_{i=1}^{k} q_i \bfy^o_i + \bfy_s - B
\right)^2 + \nonumber \\
M\sum_{i=0}^{r-1}\left (
       \bfx_i^p+\bfy_i^p-\bfz_i^p
\right)^2 + 
M\sum_{i=0}^{r-1}\left (
       \bfx_i^d+\bfy_i^d-\bfz_i^d
   \right)^2,  \label{eq:QuadHardProb:FollowerObj}
\end{gather}
where $ \fy^{\text{binary}}(\bfy) $ correspond to those terms which enforce the binary constraints on the required follower's variables.
As needed in \cref{eq:Sklbrg-Quad-P}, all the decision variables of the follower are constrained to be integers.

The leader's constraints are the following.
    \begin{align}
        \sum_{i=0}^{r-1}\bfx^p_i + \sum_{i=0}^{r-1}\bfx^d_i \quad&\le\quad r \\
        \bfx^p_i, \bfx^d_i \quad&\in\quad \B,
    \end{align}
and the leader's objective is to minimise
\begin{align}
    \sum_{i=0}^{r-1} (Q+2^i)\bfy^p_i +         \sum_{i=0}^{r-1}  Q\bfy^d_i + \sum_{i=1}^{k} q_i \bfy^o_i
  \label{eq:QuadHardProb:LeaderObj}
\end{align} \label{eq:QuadHardProb}
\end{subequations}

Having defined the equivalent Integer Convex Quadratic Bilevel Program in \cref{eq:QuadHardProb}, we first analyse the follower's objective function.
The last term as mentioned in the objective function ensures that $\bfx^d_i + \bfy^d _i - \bfz^d_i = 0$. If not, the objective increases by $M$, which the follower does not want.
Since $\bfz^d_i$ is non-negative (binary in particular), and it does not appear anywhere, we can simplify this to indicate a follower constraint $\bfx^d_i + \bfy^d_i \le 1$ for each $i$ from $0$ to $r-1$,

Analogously, by looking at the penultimate term, we can think of it to be equivalent to $\bfx^p_i + \bfy^p_i \le 1$ constraint added to the follower's problem.

Finally, the first term is minimised when 
\begin{align*}
     \sum_{i=0}^{r-1} (Q+2^i)\bfy^p_i +         \sum_{i=0}^{r-1}  Q\bfy^d_i + \sum_{i=1}^{k} q_i \bfy^o_i  \quad&=\quad  B-\bfy_s
\end{align*}
The LHS here is exactly the leader's objective. 
$\bfy_s$ in the RHS can be chosen by the follower to be $0$ or $1$. 
Thus, this term is minimised if the leader's objective is either $B$ or $B-1$.
Moreover, the pessimistic assumption ensures that, given both the choices, the follower will try to ensure that the leader's objective is $B$ and not $B-1$.

\paragraph{The equivalent bilevel program.}
Putting the above observations together, the essential bilevel program that gets solved is equivalent to the following.
\begin{subequations}
    \begin{align}
        \min_\bfx \max_\bfy \quad&:\quad \sum_{i=0}^{r-1} (Q+2^i)\bfy^p_i +         \sum_{i=0}^{r-1}  Q\bfy^d_i + \sum_{i=1}^{k} q_i \bfy^o_i & \text{s.t.}\\
        \sum_{i=0}^{r-1}\bfx^p_i + \sum_{i=0}^{r-1}\bfx^d_i \quad&\le\quad r \\
        \bfx^p_i, \bfx^d_i \quad&\in\quad \B \\
        (\bfy^o,\bfy^p,\bfy^d) \quad&\in\quad \arg\max_\bfy \left \{
        \begin{aligned}
      1
      &\qquad \text{s.t.}\\
            \bfx^p_i + \bfy^p_i \quad&\le \quad 1 \\
            \bfx^d_i + \bfy^d_i \quad&\le \quad 1 \\
            \bfx^o_i + \bfy^o_i \quad&\le \quad 1 \\
            \sum_{i=0}^{r-1} (Q+2^i)\bfy^p_i +         \sum_{i=0}^{r-1}  Q\bfy^d_i + \sum_{i=1}^{k} q_i \bfy^o_i  \quad&\le \quad B\\
            \sum_{i=0}^{r-1} (Q+2^i)\bfy^p_i +         \sum_{i=0}^{r-1}  Q\bfy^d_i + \sum_{i=1}^{k} q_i \bfy^o_i  \quad&\ge \quad B-1
        \end{aligned}
        \right \}
\end{align} \label{eq:QuadHardProb:b}
\end{subequations}
The follower's ability to constrain the leader's objective to be either $B$ or $B-1$ is captured by the last two constraints of the follower.
We let the follower's objective to be $1$, indicating that the follower has no objective. 
The pessimistic assumption now ensures that among multiple feasible solutions, the follower will choose the one that hurts the leader the most. 
Next, we state that achieving an optimal objective value strictly better than $B$ indicates we have a \textsc{Yes} instance of \SSI.

\paragraph{Claim 1. \SSI \textsc{Yes} $\implies$. } If the answer to the \SSI instance is \textsc{Yes}, then the leader's optimal objective value for the problem in \cref{eq:QuadHardProb:b} is $B-1$ or smaller.

\noindent {\em Proof of claim 1.} 
Let $S$ be the smallest integer between $R$ and $R+2^r$ that cannot be written as a subset sum of $q_1$ to $q_k$. This means $S$ cannot be written as a subset sum of $q_1$ to $q_k$ but $S-1$ can be. 
The constraints on the leader restricts them to choose at most $r$ components of the $\bfx^p$ and $\bfx^d$ variables to be $1$.
We build a feasible solution to the leader that has objective value of at most $B-1$ now.
Since $S-R$ is a positive integer less than $2^r$, it can be written as a sum of first $r$ powers of $2$.
i.e., $S-R = \sum_{i\in I} 2^i$ where $I \subseteq \{0, \dots, r-1\}$. 
Now, let the leader choose $\bfx_i^p = 1, \bfx_i^d = 0$ if $i \in I$ and $\bfx_i^p=0, \bfx_i^d = 1$ if $i \not\in I$.
Clearly, the leader has picked exactly $r$ components of $\bfx^p, \bfx^d$ to be $1$. 

As analysed earlier, the follower essentially has constraints of the form $\bfx^p_i + \bfy^p_i \le 1$ and $\bfx^d_i + \bfy^d_i \le 1$.
Observe that among the $2r$ components in $\bfx^p$ and $\bfx^d$, the leader has already set $r$ of them to $1$. 
This means, at most the remaining $r$ of the components in $\bfy^p$ and $\bfy^d$ could be set to $1$ by the follower. 
If the follower sets anything fewer than $r$ components to $1$ (let's say $\ell < r$ components), then the largest value that the leader's objective could possibly make is at most $\ell Q + (2^r-1) - (S-R) + Q $. The last $+Q$ is due to setting all of {\em $\bfy^o_i$s} to $1$. However, this objective is strictly less than $B$ as needed. 
So, if this is optimal to follower, then we are already done for the claim.
However, this need not be optimal to the follower. 
Alternatively, if the follower sets all $r$ components to $1$, then the first two terms, i.e., $\sum_{i=0}^{r-1} (Q+2^i)\bfy^p_i +         \sum_{i=0}^{r-1}  Q\bfy^d_i$ evaluates to exactly $rQ + (2^r-1) + (R-S) = B-S$. Note that we said $S$ cannot be written as subset sum, but $S-1$ can be written as a subset sum of $q_1$ to $q_k$.
This means, the following. Suppose $\sum_{i\in I}q_i = S-1$ for some $I \subseteq \{1,\ldots,k\}$, then setting $\bfy^o_i = 1$ for $i\in I$ and the remaining $\bfy_i^o = 0$ makes $\sum_{i=0}^{r-1} (Q+2^i)\bfy^p_i +         \sum_{i=0}^{r-1}  Q\bfy^d_i + \sum_{i=1}^{k} q_i \bfy^o_i = B-1$. Now, the follower can choose $\bfy_s = 1$, which will be optimal for the follower. However, this means that the leader has an objective of $B-1$ as required.

\paragraph{Claim 2. \SSI \textsc{No} $\implies$.} If the answer to the \SSI instance is \textsc{No}, then the leader's optimal objective value for the problem in \cref{eq:QuadHardProb:b} is $B$.

\noindent {\em Proof of claim 2.} The leader must choose at most $r$ components of $\bfx^p$ and $\bfx^d$ to be $1$. Among the components that the leader did not set to $1$, the follower can arbitrarily choose $r$ components of $\bfy^p$ and $\bfy^d$, and set them to $1$. Suppose $T := \sum_{i=0}^{r-1} (Q+2^i)\bfy^p_i +         \sum_{i=0}^{r-1}  Q\bfy^d_i$, we can observe that $T$ will be between $rQ$ (if all of $\bfy^d$s are $1$s) and $rQ + 2^r-1$ (if all of $\bfy^p$s are $1$s). Now, since it is a \textsc{No} instance of \SSI, both $B-T$ and $B-T-1$ can be written as subset sum of $q_1$ to $q_k$.
But among the two, because of the pessimistic assumption, the follower will make a choice that hurts the leader the most. 
So, let $I \subseteq \{1,\dots, k\} $ such that $\sum_{i\in I} q_k = B-T$, the follower will set $\bfy_i^o=1$ for $i \in I$ and set $\bfy_s = 0$. 
This makes the leader's objective equal to $T + (B-T) = B$, completing the proof of the claim.

Thus, suppose a polynomial time algorithm exists to solve the problem in \cref{eq:QuadHardProb:b} (with oracle access to $NP$-complete problems), then we can solve \SSI in polynomial time (with oracle access to $NP$-complete problems). 
However, $\sigpt$-hardness of \SSI prevents that, indicating that \cref{eq:Sklbrg-Quad-P} is $\sigpt$-hard.
\end{proof}

The construction for the above proof can be easily extended to show that the optimistic version of the Integer Convex Quadratic Bilevel Program is also $\sigpt$-hard.
We state this formally below. 

\begin{theorem}
The optimistic version of the \em{Integer Convex Quadratic Bilevel Program} as stated in \cref{eq:Sklbrg-Quad-O} is $\sigpt$-hard.
\end{theorem}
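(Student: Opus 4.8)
The plan is to reuse the construction from the proof of \cref{thm:QuadIntHard} almost verbatim, adjusting only the mechanism by which the follower selects among multiple optimal responses. Recall that in the pessimistic proof, the entire hardness came from the follower's ability to choose between making the leader's objective equal to $B$ or $B-1$ (captured by the follower's choice of $\bfy_s \in \{0,1\}$ together with the two ``sandwiching'' constraints forcing the leader's objective into $\{B-1, B\}$). In the optimistic version the follower will instead break ties in the leader's favor, so I would flip the sign of the follower's influence so that the \emph{optimistic} tie-breaking reproduces the same $B$-versus-$(B-1)$ dichotomy. Concretely, the goal is to arrange the penalty terms so that the follower is again indifferent between the two feasible objective levels, but now the leader gets to pick the more favorable one.

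First I would restate the follower's objective \eqref{eq:QuadHardProb:FollowerObj}, keeping the binary-enforcing terms $\fy^{\text{binary}}$, the squared linking terms $M\sum(\bfx_i^p+\bfy_i^p-\bfz_i^p)^2$ and $M\sum(\bfx_i^d+\bfy_i^d-\bfz_i^d)^2$ exactly as before, since these only encode the constraints $\bfx^p_i+\bfy^p_i\le 1$ and $\bfx^d_i+\bfy^d_i\le 1$ and are orientation-independent. The only substantive change is to the squared target term $\bigl(\sum (Q+2^i)\bfy^p_i + \sum Q\bfy^d_i + \sum q_i\bfy^o_i + \bfy_s - B\bigr)^2$: I want this term, together with the slack variable $\bfy_s\in\{0,1\}$, to leave the follower exactly indifferent between an objective value of $B$ and of $B-1$, so that the optimistic assumption lets the leader realize $B-1$ whenever it is achievable. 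Since the slack $\bfy_s$ already ranges over $\{0,1\}$, the follower can hit the squared-term minimum of $0$ at leader-objective $B$ (with $\bfy_s=0$) or at $B-1$ (with $\bfy_s=1$); both are genuine optima of the follower's problem, so the tie-breaking rule alone distinguishes the two versions. This is the key observation: the pessimistic and optimistic constructions can share one follower objective, and only the selection rule changes the outcome.

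With this in place, the two claims mirror the pessimistic proof with the roles of the tie-break reversed. For the \SSI \textsc{Yes} direction, I would argue exactly as in Claim~1: letting $S$ be the smallest unrepresentable integer in $[R, R+2^r)$, the leader chooses $\bfx^p_i=1$ for $i\in I$ and $\bfx^d_i=1$ for $i\notin I$ where $S-R=\sum_{i\in I}2^i$, forcing the follower onto objective levels from which $B-1$ is reachable (because $S-1$ is a subset sum); under the optimistic rule the follower cooperates and the leader achieves $B-1$ or less. For the \SSI \textsc{No} direction, I would show that whatever the leader does, every achievable follower configuration that hits the squared-term minimum forces the leader's objective to be at least $B$ along \emph{both} tie-break branches --- here the \textsc{No} hypothesis (both $B-T$ and $B-T-1$ are subset sums for the relevant $T$) is used to confirm that the optimistic follower still cannot drop the leader below $B$, so the optimal value is exactly $B$ regardless of the favorable tie-breaking.

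The step I expect to be the main obstacle is verifying the \textsc{No} direction under the optimistic rule, since optimism gives the leader more power and one must be careful that no leader choice, combined with a favorable follower response, drops the objective to $B-1$. This requires checking that for a \textsc{No} instance, whenever the follower can attain the squared-term minimum at leader-objective $B-1$, it can \emph{also} attain it at $B$, so both are follower-optimal and hence the favorable selection $B-1$ would already have been available --- contradicting the \textsc{No} assumption via the subset-sum representability of $B-T-1$. Making this bookkeeping airtight (ensuring the slack variable $\bfy_s$ and the sandwiching bounds behave symmetrically so that there is genuinely no asymmetry the leader can exploit beyond what \SSI representability permits) is the delicate part; the rest is a direct transcription of the pessimistic argument with ``the follower hurts the leader'' replaced by ``the follower helps the leader.''
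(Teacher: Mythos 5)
There is a genuine gap, and it sits exactly at the step you flag as ``the main obstacle.'' Your plan keeps the follower's objective \cref{eq:QuadHardProb:FollowerObj} unchanged and lets the \emph{optimistic} selection rule choose between the two follower optima at leader-objective $B$ and $B-1$. But then the reduction no longer distinguishes \textsc{Yes} from \textsc{No} instances. In a \textsc{No} instance, for \emph{every} leader choice (with the induced value $T$), both $B-T$ and $B-T-1$ are subset sums, so the follower has an optimal response (squared term equal to $0$, with $\bfy_s=1$) that puts the leader at $B-1$; the optimistic follower will select it, and the leader's optimal value is $B-1$. In a \textsc{Yes} instance the leader also achieves $B-1$. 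So the threshold query ``is the optimum $\le B-1$?'' is answered \textsc{Yes} in both cases and the reduction fails. Your proposed fix --- that attainability of $B-1$ would ``contradict the \textsc{No} assumption via the subset-sum representability of $B-T-1$'' --- is backwards: representability of $B-T-1$ is precisely what \emph{makes} $B-1$ attainable by a follower-optimal response; it does not rule it out.

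The paper's proof avoids this by not relying on the tie-breaking rule at all. It observes that both the leader's objective \cref{eq:QuadHardProb:LeaderObj} and the follower's objective take only integer values (for integer $M$), and then perturbs the follower's objective by subtracting half of the leader's objective. Among follower responses that are tied in the original integer-valued objective, the perturbation makes the one that maximizes the leader's objective (i.e., hurts the minimizing leader the most) the follower's \emph{unique} optimum. The optimistic assumption therefore never gets to act on a tie, the follower's behaviour is forced to be pessimistic, and the two claims of the pessimistic proof carry over verbatim. The missing idea in your proposal is this perturbation of the follower's objective; without it, the construction as stated does not yield a valid reduction for the optimistic version.
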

\begin{proof}
The proof for \cref{thm:QuadIntHard} can be easily extended to show that the optimistic version of the Integer Convex Quadratic Bilevel Program is $\sigpt$-hard.
In particular, observe that the leader's objective function in \cref{eq:QuadHardProb:LeaderObj} is independent of $\bfx$, the leader's decision variables. 
Moreover, from the integrality of all the variables in the leader's objective function and the fact that all the corresponding coefficients are integers, the leader's objective function also takes only integer values.
Moreover, observe that if $M$ is chosen as an integer, then the follower's objective function in \cref{eq:QuadHardProb:FollowerObj} also takes only integer values.
Thus, suppose that  we subtract from the follower's objective a new term given by 
$
\frac{1}{2} \left(     \sum_{i=0}^{r-1} (Q+2^i)\bfy^p_i +         \sum_{i=0}^{r-1}  Q\bfy^d_i + \sum_{i=1}^{k} q_i \bfy^o_i
\right)
$, \emph{i.e., } half of the leader's objective. 
Now, among multiple equivalent solutions (all of which will have the same integer value), the fractional value subtracted now ensures that the follower will choose the solution that hurts the leader the most.
Thus, even with an optimistic assumption for the follower, a pessimistic behavior as needed in the proof of \cref{thm:QuadIntHard} can be emulated. 
The rest of the proof flows as in the proof of \cref{thm:QuadIntHard}.
\end{proof}

\subsection{Approximation algorithms}
Next, we present a few approximation algorithms for \cref{eq:Sklbrg-Quad-O} and \cref{eq:Sklbrg-Quad-P}.
Given that these problems are $\sigpt$-hard, it is unlikely that even with an oracle to solve $NP$-complete problems, these problems can be solved in polynomial time.
Thus, we first present approximation algorithms that uses an oracle to solve $NP$-complete problems, but runs in time bounded by a polynomial in the size of the input.
In other words, the algorithm we present makes only polynomially many calls to the oracle to solve $NP$-complete problems.
Moreover, in each call, the size of the $NP$-complete problem that will be solved is polynomially bounded by the size of the input to the bilevel program.
This idea is adopted in the operations research literature  too. For example \citet{Qi2024} provide an approximation algorithm for a family of integer linear bilevel programs, which are also $\sigpt$-hard. 
The algorithm runs in polynomial time \emph{after} having an access to an oracle to solve mixed-integer linear programs, an $NP$-complete problem. 
From a practical perspective, this is relevant as many commercial solvers are available for a wide range of $NP$-complete problems.
Mixed-integer linear programming solvers, SAT solvers, among others, are widely known software, which can emulate the required ``oracle''. 

\subsubsection{Polynomial-time approximation algorithm with an oracle access to solve $NP$-complete problems. }

We present \cref{alg:apx} now. 
In \cref{eq:Sklbrg-Quad-O} and \cref{eq:Sklbrg-Quad-P}, we model the leader's behaviour as follows. 
The leader makes a decision, under the assumption that the follower does not have to restrict herself to $\Z^{n_y}$. 
In other words, we solve the FRV with respect to the original problem. 
When the integer constraints are relaxed, the follower's decision is obtained as a unique solution to a system of linear equations. 
In particular, solving $\Qy \bfy = - \left (\Cy \bfx + \dy \right )$ gives the follower's unique optimal  response.
However, this means that the above equality can be added as a constraint to the leader, replacing the bilevel constraint. 
Since the follower has a unique optimal solution in the FRV, the optimistic and the pessimistic versions of the problem are equivalent.
The above gives an optimal solution $\bfx^\am$ for the leader and a possibly integer-infeasible $\hat \bfy^\am$ for the follower. 
However, now, freezing the leader's solution as $\bfx^\am$, we solve an integer-convex quadratic program, (the follower's problem) to get the optimal integer solution for the follower, denoted by $\bfy^\am$.
The algorithm is presented formally below. 
\begin{algorithm}[h]
  \caption{The Relaxed Foresight Algorithm} \label{alg:apx}
  \begin{algorithmic}[1]
    \Require{An instance of \cref{eq:Sklbrg-Quad-O} or \cref{eq:Sklbrg-Quad-P} defined by $\hx, \dx, \Qy, \Cy, \dy, \gx^i\ \forall\ i\in \{1,\ldots,m_x\}$ } 
  \Ensure {$(\bfx^\am, \bfy^\am)$ feasible to \cref{eq:Sklbrg-Quad-O} or \cref{eq:Sklbrg-Quad-P} and \emph{approximately optimal}}.
  \State Solve the FRV of \cref{eq:Sklbrg-Quad-O} or \cref{eq:Sklbrg-Quad-P} \label{alg:apx:FRV}
    to get the optimal solution $(\bfx^\am, \hat \bfy^\am)$. 
    \State Solve the convex quadratic integer program given by 
      $\min _{\bfy \in \Z^{n_y}}  \frac{1}{2} \bfy^\top \Qy \bfy + (\Cy \bfx^\am + \dy)^\top \bfy$
    to obtain the optimal solution $\bfy^\am$. If there are multiple optimal $\bfy$-s, choose the one that has the largest value of $\dx^\top \bfy$ in case of \cref{eq:Sklbrg-Quad-P}, and the smallest value in case of \cref{eq:Sklbrg-Quad-O}.
    \State \Return $\bfx^\am, \bfy^\am$.
  \end{algorithmic}
\end{algorithm}

\paragraph{Runtime analysis of \cref{alg:apx}.}
Consider the case where $\hx$ and $\dx$ are linear functions, and $\gx^i \quad \forall\ i \in \{1,\ldots,m_x\}$ is a set of $m_y$ affine constraints. In other words, $\left\{ \bfx \mid \gx^i(\bfx) \le 0 \quad \forall\ i \in \{1,\ldots,m_x\} \right\}$ is a polyhedron. 
Now, the follower's problem can be rewritten as linear equality constraints $\Qy \bfy + \Cy \bfx = -\dy$.
Thus, the FRV is a mixed-integer linear program. This is an $NP$-complete problem, which can be solved by a single call to the $NP$-complete problem oracle. 
The follower's problem involves solving an unconstrainted quadratic minimization problem over $\Z^n$. 
This is also an $NP$-complete problem, and can be solved by a single call to the $NP$-complete problem oracle. 
Only in the case where this problem has multiple optimal solutions, we need to make one more call to minimize/maximize the follower's objective among the optimal solutions to the follower's problem. 

\paragraph{Approximation guarantee.}
In the below part of the paper, we prove that the solution $(\bfx^\am,\bfy^\am)$ obtained from \cref{alg:apx} is approximate to the true optimal solution $(\bfx^\om,\bfy^\om)$ for \cref{eq:Sklbrg-Quad-O} or \cref{eq:Sklbrg-Quad-P}. 
We do so by finding an upper bound on the difference between the true optimal objective value $f^\om = \fx(\bfx^\om,\bfy^\om)$, and the objective value obtained from \cref{alg:apx}, $f^\am = \fx(\bfx^\am,\bfy^\am)$.
This gives an additive approximation guarantee for the leader's objective value.

We provide the approximation guarantee in terms of the property of the matrix $\Qy$ in the follower's problem.
We first define this property, called \emph{proximity}, which gives us a bound on the difference between the continuous and integer optima to the follower's problem.

\begin{definition}\label{def:prox}
  Let $\Qq$ be a given positive definite matrix. 
  The proximity of $\Qq$ with respect to the $\ell_p$ vector norm is denoted by $\prox[\Qq][p]$ and is the optimal objective value of the problem 
  \begin{subequations}
  \begin{align}
    \max _ {\dq \in \R^n  }\max_{ \substack{  \bfu \in \R^n \\ \bfv\in\Z^n }
    } \quad&:\quad  \norm{\bfu-\bfv}_p & \text{s.t.} \label{eq:prox:obj} \\
    \bfu \quad&\in\quad \arg \min \left \lbrace \frac{1}{2}\bfy^\top \Qq \bfy + \dq^\top \bfy  : \bfy \in \R^n \right \rbrace  \label{eq:prox:constrC} \\
    \bfv \quad&\in\quad \arg \min \left \lbrace \frac{1}{2}\bfy^\top \Qq \bfy + \dq^\top \bfy  : \bfy \in \Z^n \right \rbrace  \label{eq:prox:constr}
  \end{align}  \label{eq:prox}
  \end{subequations}
 If $p$ is not explicitly mentioned, then $p=2$ is used.
\end{definition}

Next, we state a proposition from \citet{Sankaranarayanan2024} to obtain a bound for the proximity of a positive definite matrix $\Qq$.

\begin{proposition}[\citep{Sankaranarayanan2024}] \label{thm:proxQuad}
  Given a positive definite matrix  $\Qq$ of dimension $n\times n$,  $\prox \le  \frac{\Flt{n}}{4} \sqrt{\frac{\lambda_1}{\lambda_n}}$
  where $\lambda_1$ and $\lambda_n$ are the largest and the smallest singular values of $\Qq$ and $\Flt{n}$ is a constant dependent only on the dimension $n$. 
\end{proposition}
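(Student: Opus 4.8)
The plan is to reformulate the proximity objective as a nearest-lattice-point question in the norm induced by $\Qq$ and then invoke the flatness theorem to control the worst-case displacement. First I would complete the square: writing $\bfu = -\Qq^{-1}\dq$ for the unique continuous minimizer in \cref{eq:prox:constrC}, one has $\tfrac12\bfy^\top\Qq\bfy + \dq^\top\bfy = \tfrac12(\bfy-\bfu)^\top\Qq(\bfy-\bfu) - \tfrac12\bfu^\top\Qq\bfu$, so the integer minimizer $\bfv$ in \cref{eq:prox:constr} is exactly a lattice point minimizing $\norm{\bfy-\bfu}_{\Qq}$, where $\norm{\bfw}_{\Qq} := \sqrt{\bfw^\top\Qq\bfw}$. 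Since $\dq\mapsto\bfu$ is a bijection of $\R^n$, the proximity $\prox$ equals $\sup_{\bfu\in\R^n}\norm{\bfu-\bfv(\bfu)}_2$, where $\bfv(\bfu)$ is a $\Qq$-nearest integer point to $\bfu$. By $\Z^n$-periodicity the supremum may be restricted to a compact fundamental domain, so it is attained; let $\bfu^\star$ attain it and set $r := \norm{\bfu^\star-\bfv(\bfu^\star)}_{\Qq}$, i.e.\ the $\Qq$-covering radius of $\Z^n$.

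The core step is to bound $r$ using flatness. By construction the open $\Qq$-ball $E := \{\bfy : \norm{\bfy-\bfu^\star}_{\Qq} < r\}$ contains no integer point, so $E$ is lattice-free, and Khinchine's flatness theorem bounds its lattice width by the dimension-dependent flatness constant $\Flt{n}$. I would compute this width explicitly: the support function of a $\Qq$-ball of radius $r$ in a direction $\mathbf{c}$ is $r\sqrt{\mathbf{c}^\top\Qq^{-1}\mathbf{c}}$, so the width of $E$ along an integer direction $\mathbf{c}$ is $2r\sqrt{\mathbf{c}^\top\Qq^{-1}\mathbf{c}}$, whence $\min_{\mathbf{c}\in\Z^n\setminus\{0\}} 2r\sqrt{\mathbf{c}^\top\Qq^{-1}\mathbf{c}} \le \Flt{n}$. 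Lower-bounding each width via $\mathbf{c}^\top\Qq^{-1}\mathbf{c}\ge \lambda_1^{-1}\norm{\mathbf{c}}_2^2 \ge \lambda_1^{-1}$ (using $\norm{\mathbf{c}}_2\ge 1$ for nonzero integer $\mathbf{c}$ and that $\lambda_1$ is the largest eigenvalue of $\Qq$) then yields $r \le \tfrac12\Flt{n}\sqrt{\lambda_1}$.

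Finally I would pass from the $\Qq$-metric back to the Euclidean one: since $\norm{\bfw}_{\Qq}^2\ge\lambda_n\norm{\bfw}_2^2$, we obtain $\norm{\bfu^\star-\bfv(\bfu^\star)}_2 \le r/\sqrt{\lambda_n}$, and combining with the bound on $r$ gives an estimate of the form $\prox \le c\,\Flt{n}\sqrt{\lambda_1/\lambda_n}$. I expect the \emph{main obstacle} to be sharpening this constant from the value $\tfrac12$ produced by the crude chain above down to the claimed $\tfrac14$. The two inequalities I chained — bounding $r$ by the worst flatness direction and bounding the Euclidean length by the smallest eigenvalue — are each individually tight but cannot be tight simultaneously, since the flatness direction $\mathbf{c}$ (governed by $\Qq^{-1}$) and the displacement $\bfu^\star-\bfv(\bfu^\star)$ interact through the eigenbasis of $\Qq$. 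Exploiting this alignment, together with pinning down the exact normalization under which $\Flt{n}$ is defined in \citet{Sankaranarayanan2024}, is the delicate part; the completing-the-square reduction, the width computation for the ellipsoid, and the eigenvalue conversions are otherwise routine.
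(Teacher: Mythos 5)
The paper never proves this proposition: it is imported verbatim from \citet{Sankaranarayanan2024}, so there is no in-paper argument to compare against. That said, your route --- completing the square so that the integer minimizers become exactly the $\Qq$-nearest lattice points to the continuous minimizer $\bfu$, noting that the open ellipsoid $\{\bfy:\norm{\bfy-\bfu}_{\Qq}<r\}$ is lattice-free, invoking the flatness theorem, computing the ellipsoid's width in direction $\mathbf{c}$ as $2r\sqrt{\mathbf{c}^\top\Qq^{-1}\mathbf{c}}$, and converting between the $\Qq$-norm and the Euclidean norm via the extreme eigenvalues --- is visibly the same structure the cited source uses: the paper's own proof of \cref{thm:proxVal} discloses that Theorem~3(ii) of \citet{Sankaranarayanan2024} places all integer minimizers inside the ellipsoid $\tfrac12\bfx^\top\Qq\bfx\le\lambda_1\Flt{n}^2/32$, i.e.\ at $\Qq$-distance at most $\Flt{n}\sqrt{\lambda_1}/4$ from $\bfu$, which is precisely the step where your chain yields $\Flt{n}\sqrt{\lambda_1}/2$. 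Your argument is otherwise sound, and it correctly covers the fact that $\prox$ is defined via the \emph{farthest} integer minimizer: all integer minimizers lie at the same $\Qq$-distance $r$ from $\bfu$, so your bound applies to each of them. The minor loose ends (attainment of the supremum, which is not needed since the bound holds pointwise in $\bfu$; calling $r$ the covering radius when it is only bounded by it) are harmless.

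The genuine gap is the factor of two, and you should not expect to recover it by exploiting alignment between the flat direction and the displacement, because with the width convention this paper itself states (``a width not greater than $\Flt{n}$'' for a convex set disjoint from $\Z^n$) the constant $\tfrac14$ is in tension with the paper's own \cref{prop:proxnDiag}. Take $\Qq=I$ in dimension $n=2$: \cref{prop:proxnDiag} gives $\prox=\sqrt{2}/2\approx 0.71$, whereas $\tfrac14\Flt{2}\sqrt{\lambda_1/\lambda_n}=\Flt{2}/4\approx 0.54$ using Hurkens' planar flatness value $\Flt{2}=1+2/\sqrt{3}$; in dimension $1$ the mismatch is $1/2$ versus $1/4$. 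So the discrepancy between your $\tfrac12$ and the stated $\tfrac14$ is not slack in your two inequalities but a question of how $\Flt{n}$ is normalized in \citet{Sankaranarayanan2024} (for instance, whether their flatness quantity is a half-width). Under the standard normalization your constant $\tfrac12$ is the defensible one, and every downstream bound in the paper (\cref{thm:eapGty}, \cref{cor:eapGty}, \cref{thm:apx}) would simply absorb a factor of two; you correctly identified the constant, rather than the structure of the argument, as the only delicate point.
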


The constant $\Flt{n}$ is known as the \emph{flatness constant}. 
It is known that if $C \subseteq \R^n$ is a convex set such that $C \cap \Z^n = \emptyset$, then there must exist a direction $\bfz$ along which $C$ should be flat, \emph{i.e., } have a width not greater than $\Flt{n}$.
The value of $\Flt{n}$ depends only on the dimension $n$ and is invariant across any convex set. 
We direct the reader to \citet{barvinok2002course} for a more elaborate discussion on the flatness constant.
Moreover, the bound $\Flt{n} \le n^{5/2}$ comes from \citet{barvinok2002course} but tighter bounds are conjectured to exist.
More recently, \citet{reis2024subspaceflatnessconjecturefaster} show that $\Flt{n} \sim O(n \log^3(2n)$.

\begin{proposition} \label{prop:proxnDiag}
     If an $n\times n$ matrix $\Qq$ is a {\em diagonal} positive definite matrix, then $\prox = \frac{\sqrt{n}}{2}$.
\end{proposition}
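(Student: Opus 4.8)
The plan is to exploit the fact that a diagonal $\Qq$ renders the follower's objective separable across coordinates, reducing the $n$-dimensional proximity in \cref{def:prox} to $n$ independent one-dimensional rounding problems. Writing $\Qq = \operatorname{diag}(\lambda_1,\ldots,\lambda_n)$ with each $\lambda_i>0$, the objective $\frac{1}{2}\bfy^\top\Qq\bfy + \dq^\top\bfy$ splits as $\sum_{i=1}^n\left(\frac{1}{2}\lambda_i \bfy_i^2 + \dq[i]\,\bfy_i\right)$, so both the continuous minimizer $\bfu$ in \cref{eq:prox:constrC} and the integer minimizer $\bfv$ in \cref{eq:prox:constr} decouple coordinatewise, and the outer maximization over $\dq$ in \cref{eq:prox:obj} likewise separates.

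First I would solve each one-dimensional subproblem. Completing the square gives $\frac{1}{2}\lambda_i \bfy_i^2 + \dq[i]\,\bfy_i = \frac{1}{2}\lambda_i(\bfy_i - \bfu_i)^2 + c_i$, where $\bfu_i = -\dq[i]/\lambda_i$ and $c_i$ is a constant independent of $\bfy_i$. Hence the continuous optimum is exactly $\bfu_i$, and since $\lambda_i>0$ the integer optimum $\bfv_i$ is simply an integer nearest to $\bfu_i$. Consequently $|\bfu_i - \bfv_i|\le \tfrac12$ in every coordinate, with equality precisely when $\bfu_i$ is a half-integer.

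For the upper bound I would sum these coordinatewise estimates: $\norm{\bfu-\bfv}_2 = \sqrt{\sum_{i=1}^n (\bfu_i-\bfv_i)^2}\le \sqrt{n\cdot(1/2)^2} = \frac{\sqrt{n}}{2}$, which holds for every feasible $\dq$ and therefore bounds the maximum defining $\prox$. For the matching lower bound I would exhibit a single instance attaining it: taking $\dq[i] = -\lambda_i/2$ forces $\bfu_i = \tfrac12$ for all $i$, at which point both $0$ and $1$ are optimal integers, so the maximization over $\bfv$ in the argmin set permits the choice $\bfv_i=0$, giving $|\bfu_i-\bfv_i| = \tfrac12$ in each coordinate and hence $\norm{\bfu-\bfv}_2 = \frac{\sqrt{n}}{2}$. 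Combining the two directions yields $\prox = \frac{\sqrt{n}}{2}$.

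The only subtlety — and the one spot where I expect to need care — is the tie-breaking at half-integer values of $\bfu_i$: there the integer minimizer is not unique, and the argument relies on selecting the ``worst'' one. This is legitimate because \cref{def:prox} lets $\bfv$ range over the entire argmin set while \cref{eq:prox:obj} is maximized over that set, so picking $\bfv_i = 0$ is permitted and the supremum is genuinely attained. Notably, no flatness-constant argument (as in \cref{thm:proxQuad}) is needed here; the diagonal case collapses entirely to elementary coordinatewise rounding, and the resulting value $\frac{\sqrt{n}}{2}$ is independent of the eigenvalues of $\Qq$, showing in particular that the general bound in \cref{thm:proxQuad} is loose for diagonal matrices.
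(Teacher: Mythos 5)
Your proof is correct and follows essentially the same route as the paper's: both arguments use the separability of the diagonal quadratic to reduce the problem to coordinatewise rounding, obtain the upper bound $\sqrt{n}/2$ from $|\bfu_i-\bfv_i|\le\tfrac12$, and attain equality at the half-integer continuous minimizer. Your treatment is slightly more explicit about the tie-breaking at half-integers and about exhibiting the extremal $\dq$, but these are refinements of the same argument rather than a different approach.
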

\begin{proof}
    Given some value of $\dq$, define $\bfu = -\Qq^{-1}\dq$.
    Define ${\bfv}$ component-wise, where $[{\bfv}]_i = \rnd([\bfu]_i)$, where $\rnd(x)$ is the integer closest to $x$, with ties broken arbitrarily. 
    First, we claim that $(\dq, \bfu, \bfv)$ together is feasible to \cref{eq:prox}.
    Clearly, by definition, $\bfu$ minimizes $f(\bfy) := \frac{1}{2}\bfy^\top \Qq \bfy + \dq^\top \bfy$ over $\R^n$. 
    So, if we show that $\bfv$ minimizes $f(\bfy)$ over $\Z^n$, the claim is proven.
    Let $\bfz=\bfu+\bfw$ minimize $f(\bfz)$ over $\Z^n$. 
    We can write $f(\bfu+\bfw) = \frac{1}{2}(\bfu+\bfw)^\top \Qq (\bfu+\bfw) + \dq^\top (\bfu+\bfw) = \frac{1}{2}\bfw^\top \Qq \bfw + \dq^\top \bfw + \frac{1}{2}\bfu^\top \Qq \bfu + \dq^\top \bfu + \bfw^\top\Qq \bfu = \frac{1}{2}\bfw^\top \Qq \bfw  + \frac{1}{2}\bfu^\top \Qq \bfu + \dq^\top \bfu + (\dq+\Qq\bfu)^\top \bfw$. However, since $\Qq\bfu + \dq = 0$, the last term vanishes, making $f(\bfu+\bfw) = \frac{1}{2}\bfw^\top \Qq \bfw  + \frac{1}{2}\bfu^\top \Qq \bfu + \dq^\top \bfu$. We want to choose $\bfw$ to minimize the above such that $\bfu+\bfw \in \Z^n$. However, since $\bfu$ is given, the last term is a constant, hence it is equivalent to minimizing $\frac{1}{2}\bfw^\top \Qq \bfw $ such that $\bfu+\bfw = \Z^n$. However, we know that $\Qq$ is diagonal. This means, $\frac{1}{2}\bfw^\top \Qq \bfw = \frac{1}{2} \sum_{i=1}^n[\Qq]_{ii}\bfw_i^2 $. However, the above is separable, implying that it is minimized if each $\bfw_i$ individually takes as small a value as possible, so that $\bfu+\bfw \in \Z^n$. But this precisely is equivalent to saying that the rounded version of $\bfu$ is the integer optimum. This proves the claim.

    Now, to prove the proposition, we observe that the largest norm possible for $\bfw$ is when each $\bfw_i$ is as large as possible to be rounded off, which is $0.5$. 
    This has an $\ell_2$ norm of $\sqrt{n}/2$, proving that $\prox \le \frac{\sqrt{n}}{2}$.
    The equality follows from the case, when the continuous minimum is at $0.5 \mathbf{e}$.
\end{proof}

    A function $f:D\to\R$ is said to be $\ell_p$-Lipschitz continuous with a Lipschitz constant $L_p$ over some $D\subseteq \R^n$, if $\vert{f(\bfx)-f(\bfy)}\vert \le L_p \norm{\bfx-\bfy}_p$ for every $\bfx, \bfy \in D$.
  Assuming that $\dx$ is $\ell_2$-Lipschitz continuous, we present the main approximation theorem.

\begin{theorem}[Ex-ante and ex-post guarantees in $\ell_2$ norm] \label{thm:eapGty}
  Suppose $\dx(\cdot)$ is $\ell_2$-Lipschitz continuous over the feasible set of the leader (\emph{i.e.,} the region defined by $\gx^i(\bfx) \le 0 \quad \forall\ i \in \{1,\ldots,m_x\}$) with a Lipschitz constant $L_2$.
  Let the optimal objective obtained by the \emph{leader} in \cref{eq:Sklbrg-Quad-O} or \cref{eq:Sklbrg-Quad-P} be $f^\om$. Then, 
    \begin{subequations}
        \begin{align}
            f^\am \quad&\le\quad f^\om + 2 L_2\prox[\Qy]
  \le f^\om + 2 L_2
  \frac{\Flt{n}}{4} \sqrt{\frac{\lambda_1}{\lambda_n}}
  \label{thm:eaGty} \\ 
            f^\am \quad&\le\quad f^\om +  L_2 (\prox[\Qy] + \norm{\bfy^\am - \hat \bfy^\am}_2), \label{thm:epGty}
        \end{align}
    \end{subequations}
  where $f^\am$ is the objective function value of the solution obtained from \cref{alg:apx}.
 Here, \cref{thm:eaGty} is the ex-ante guarantee and \cref{thm:epGty} is the ex-post guarantee in the $\ell_2$ norm.
 Further, if $\Qy$ is a diagonal matrix, then in \cref{thm:eaGty}, $f^\am$ is at most $f^\om +2L_2\sqrt{n_y}$ in the worst case.
\end{theorem}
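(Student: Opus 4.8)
The plan is to decompose the leader's approximation error $f^\am - f^\om$ into two pieces by routing through the optimal value of the FRV, which I will denote $\hat f := \hx(\bfx^\am) + \dx(\hat\bfy^\am)$, and to bound each piece by $L_2$ times a proximity distance. The enabling observation is that, since $\Qy$ is positive definite, for every fixed leader decision $\bfx$ the follower's relaxed problem has the unique minimizer $\hat\bfy(\bfx)$ solving $\Qy\bfy = -(\Cy\bfx + \dy)$ (so the optimistic and pessimistic FRVs coincide and $\hat f$ is well defined with $\hat\bfy^\am = \hat\bfy(\bfx^\am)$), while the follower's integer problem is exactly the problem in \cref{def:prox} with fixed $\Qq = \Qy$ and linear term $\dq = \Cy\bfx + \dy$. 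Because only $\dq$ depends on $\bfx$, \cref{def:prox} applies uniformly: for every $\bfx$ and every integer optimizer $\bfy$ of the follower's problem at $\bfx$, one has $\norm{\bfy - \hat\bfy(\bfx)}_2 \le \prox[\Qy]$.

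First I would bound the piece $f^\am - \hat f$ evaluated at $\bfx^\am$. The tie-breaking rule of \cref{alg:apx} ensures $\bfy^\am$ is exactly the integer response the pessimistic (resp. optimistic) follower selects at $\bfx^\am$, so $f^\am = \hx(\bfx^\am) + \dx(\bfy^\am)$ is the genuine leader value incurred by playing $\bfx^\am$. Lipschitz continuity of $\dx$ then gives
\[
  f^\am - \hat f = \dx(\bfy^\am) - \dx(\hat\bfy^\am) \le L_2\norm{\bfy^\am - \hat\bfy^\am}_2 \le L_2 \prox[\Qy].
\]
The middle expression $L_2\norm{\bfy^\am - \hat\bfy^\am}_2$ is computable once the algorithm has run and is precisely what yields the sharper ex-post estimate.

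Next I would bound the piece $\hat f - f^\om$ evaluated at $\bfx^\om$. Since the leader's constraints are identical in the FRV, $\bfx^\om$ is FRV-feasible, and minimality of the FRV gives $\hat f \le \hx(\bfx^\om) + \dx(\hat\bfy(\bfx^\om))$. Writing $\bfy^\om$ for the follower response realizing $f^\om$ at $\bfx^\om$ (an integer optimizer of the follower's problem there), the uniform proximity bound gives $\norm{\hat\bfy(\bfx^\om) - \bfy^\om}_2 \le \prox[\Qy]$, whence $\dx(\hat\bfy(\bfx^\om)) \le \dx(\bfy^\om) + L_2\prox[\Qy]$ and therefore $\hat f \le f^\om + L_2\prox[\Qy]$. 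Summing the two pieces yields the ex-ante bound $f^\am \le f^\om + 2L_2\prox[\Qy]$, and \cref{thm:proxQuad} applied to $\Qy$ replaces $\prox[\Qy]$ by $\tfrac{\Flt{n}}{4}\sqrt{\lambda_1/\lambda_n}$. Retaining the realized quantity $L_2\norm{\bfy^\am - \hat\bfy^\am}_2$ in the first piece instead of its worst-case value gives \cref{thm:epGty}, and substituting $\prox[\Qy] = \sqrt{n_y}/2$ from \cref{prop:proxnDiag} into the ex-ante bound settles the diagonal case.

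I expect the main obstacle to be the bridge between $\hat f$ and $f^\om$: as the paper's earlier examples emphasize, the FRV is neither a relaxation nor a restriction of the true bilevel program, so no monotonicity relates the two values directly. The argument must instead exploit only that $\bfx^\om$ stays FRV-feasible, insert it into the FRV minimization, and then pay exactly one proximity distance to pass from $\hat\bfy(\bfx^\om)$ to $\bfy^\om$. A second point requiring care is that the single constant $\prox[\Qy]$ may legitimately be reused at both $\bfx^\am$ and $\bfx^\om$; this relies on $\Qy$ being independent of $\bfx$, so that each follower instance is an admissible competitor in the maximization defining proximity. The optimistic/pessimistic distinction is confined to confirming, through the tie-break, that $f^\am = \hx(\bfx^\am) + \dx(\bfy^\am)$ is the leader's true realized value at $\bfx^\am$; the absolute-value Lipschitz estimate $\lvert\dx(\bfy^\am) - \dx(\hat\bfy^\am)\rvert \le L_2\norm{\bfy^\am - \hat\bfy^\am}_2$ then applies identically in both versions.
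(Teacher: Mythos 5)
Your proposal is correct and follows essentially the same route as the paper's proof: both pass through the FRV optimal value, use FRV-feasibility of $\bfx^\om$ together with one proximity-plus-Lipschitz step to bound $\hat f - f^\om$ by $L_2\prox[\Qy]$, and pay a second such step (kept explicit as $L_2\norm{\bfy^\am-\hat\bfy^\am}_2$ for the ex-post version) to pass from $\hat\bfy^\am$ to $\bfy^\am$. The diagonal case and the observation that the proximity bound applies to every integer minimizer (so the optimistic/pessimistic choice is immaterial) also match the paper.
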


By providing a finite bound for the difference between the objective value from the true solution and the one obtained from \cref{alg:apx}, this theorem establishes that the solution obtained from \cref{alg:apx} approximates the true optimal solution.
Now we proceed to prove \cref{thm:eapGty}. 

\begin{proof}[Proof of \cref{thm:eapGty}.]

Suppose $(\bfx^\om, \bfy^\om)$ is the (unknown) optimal solution to \cref{eq:Sklbrg-Quad-O} or \cref{eq:Sklbrg-Quad-P}. 
  Let $\hat \bfy^\om$ be the continuous minimiser of the follower's problem, given the leader's optimal decision $\bfx^\om$. 
 Given a leader's decision, the continuous relaxation of the follower's problem has a unique solution since $\Qy$ is assumed to be positive definite.
 Since there is a unique continuous minimiser, the FRV is same for both the pessimistic and the optimistic version of the problem. 
  
 Now, we apply \cref{thm:proxQuad} to the follower's problem.
 By \cref{thm:proxQuad}, we know that $\norm{\hat \bfy^\om - \bfy^\om}_2 \le \prox[\Qy][2]$.
    This inequality holds irrespective of whether $\bfy^\om$ is chosen pessimistically or optimistically as \cref{thm:proxQuad} holds for every choice of integer and continuous minimisers. 
    Furthermore, since $\dx (\cdot)$ is Lipschitz continuous with constant $L_2$, we know that 
    $ \left \vert \dx(\hat \bfy^\om) - \dx(\bfy^\om) \right \vert \le L_2 \norm{\hat \bfy^\om - \bfy^\om}_2$, implying that
    $\dx (\hat \bfy^\om) \leq \dx (\bfy^\om) + L_2\prox[\Qy][2]$. 
    
  Now, since $(\bfx^\am,\hat \bfy^\am)$ is an optimal solution to the FRV while $(\bfx^\om, \hat \bfy^\om)$ is feasible to the FRV, we have
  \begin{subequations}
  \begin{align}
    \fx(\bfx^\am, \hat \bfy^\am) 
      \quad&=\quad 
       \hx (\bfx^\am) + \dx (\hat \bfy ^\am) \\
       \quad&\leq\quad \label{ineq:quad-hats}
       \hx (\bfx^\om) + \dx (\hat \bfy^\om) \\
       \quad&\leq\quad
       \hx (\bfx^\om) + \dx (\bfy^\om) + L_2\norm{\hat \bfy^\om - \bfy^\om}_2
      \\
      \quad&\leq\quad
      f^\om + L_2\prox[\Qy][2],
    \end{align}\label{eq:t1-1}
  where $f^\om = \hx (\bfx^\om) + \dx ^\top \bfy ^\om $. $f^\om$ is the value of the objective function at the optimal solution $(\bfx^\om,\bfy^\om)$.  
  \end{subequations}
However, $(\bfx^\am, \hat \bfy^\am)$ is not necessarily feasible to \cref{eq:Sklbrg-Quad-P} or \cref{eq:Sklbrg-Quad-O}, as $\hat \bfy^\am$ may not be integral. 
$\bfy^\am$ is an integer optimum to the follower given $\bfx^\am$, hence $(\bfx^\am, \bfy^\am)$ is feasible. 

Given $(\bfx^\am$, $\hat \bfy^\am)$ is the continuous optimum to the lower-level problem and $\bfy^\am$ is an integer optimum to the lower-level problem, keeping the optimistic/pessimistic assumptions in mind, we know from \cref{thm:proxQuad} that $\norm{\bfy^\am - \hat \bfy^\am} \le \prox[\Qy]$. We also know that $\dx (\bfy^\am) \leq \dx (\hat \bfy^\am) + L\prox[\Qy]$. We use these results to provide a bound for $\fx (\bfx^\am, \bfy^\am) - \fx(\bfx^\am, \hat \bfy^\am)$, the latter of which has been proved to approximate $\fx (\bfx^\om,\bfy^\om)$ in \cref{eq:t1-1}.  
 We note that

\begin{subequations}
  \begin{align}
    \fx(\bfx^\am, \bfy^\am) \quad&=\quad 
       \hx (\bfx^\am) + \dx (\bfy ^\am) \\
       \quad&\leq\quad
       \hx (\bfx^\am) + \dx (\hat \bfy^\am) + L_2\norm{\bfy^\am - \hat \bfy^\am}_2 
       \\
       \quad&\leq\quad
       f^\om + 2L_2\prox[\Qy][2],
  \end{align} \label{eq:t1-2}
  where the first inequality is from $\dx(\cdot)$ being Lipschitz continuous, and the last inequality follows from \cref{eq:t1-1}. 
 \end{subequations}
 This proves the first inequality in \cref{thm:eaGty}. The second inequality in \cref{thm:eaGty} follows from the bound in \cref{thm:proxQuad}. 

If $\bfy^\am$ and $\hat \bfy^\am$ from the algorithm are known, then from \cref{eq:t1-1},
\begin{subequations}
    \begin{align}
    \fx(\bfx^\am, \bfy^\am) \quad&=\quad 
       \hx (\bfx^\am) + \dx (\bfy ^\am) \\
       \quad&\leq\quad
       \hx (\bfx^\am) + \dx (\hat \bfy^\am) + L_2\norm{\bfy^\am - \hat \bfy^\am}_2 
       \\
       \quad&\leq\quad
       f^\om + L_2\prox[\Qy][2]
            + L_2\norm{\bfy^\am - \hat \bfy^\am}_2,   
    \end{align} 
    where the last inequality follows from \cref{eq:t1-1}.
\end{subequations}
This proves the inequality in \cref{thm:epGty}.
This is tighter bound for $f^\om - f^\am$, given that the approximate solution has already been obtained from \cref{alg:apx}. 

Finally, from \cref{prop:proxnDiag}, if $\Qy$ is a diagonal matrix, then $\prox[\Qy] \le \sqrt{n_y}$.
Substituting this in \cref{thm:eaGty} gives the bound $f^\am \le f^\om + 2L_2\sqrt{n_y}$.
\end{proof}

\cref{thm:eapGty} shows that the solution obtained from \cref{alg:apx} approximates the true solution, since the objective value from both solutions differs by a bounded quantity.
If the leader has a linear objective function in \cref{eq:Sklbrg-Quad-P}, we can use a known norm in place of the Lipschitz constant $L_2$ in \cref{thm:eapGty}.
\begin{theorem} \label{cor:eapGty} 
    Suppose the function $\dx(\bfy) := \dx^\top \bfy$. 
    Then, the following bounds hold for the objective $f^\am$ obtained using \cref{alg:apx}:
    \begin{subequations}
        \begin{align}
            f^\am \quad&\le\quad f^\om  
+\frac{\Flt{n_y}\sqrt{\lambda_1}}{2 \sqrt{2}\norm{\Rq^{-\top} \dx}} \dx^\top  \Qy ^{-1} \dx,
  \label{cor:eaGty} 
        \end{align}
    \end{subequations}
 where $f^\om$ is the true optimal objective value, and $\Rq$ is a matrix such that $\Qq  = \Rq^\top \Rq$, \emph{i.e.,} $\Rq$ is  the Cholesky decomposition of $\Qq$..
\end{theorem}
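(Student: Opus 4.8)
The plan is to specialise \cref{thm:eapGty} to a linear leader term, but to re-run its estimate in the norm induced by $\Qy$ rather than in the Euclidean norm, since this is what produces the sharper constant. First I would rewrite the target. Writing the Cholesky factorisation $\Qy = \Rq^\top\Rq$, we have $\Qy^{-1} = \Rq^{-1}\Rq^{-\top}$ and hence $\dx^\top\Qy^{-1}\dx = (\Rq^{-\top}\dx)^\top(\Rq^{-\top}\dx) = \norm{\Rq^{-\top}\dx}^2$. Substituting this into the right-hand side of \cref{cor:eaGty}, the additive term collapses to $\frac{\Flt{n_y}\sqrt{\lambda_1}}{2\sqrt2}\norm{\Rq^{-\top}\dx}$, so it suffices to prove $f^\am \le f^\om + \frac{\Flt{n_y}\sqrt{\lambda_1}}{2\sqrt2}\norm{\Rq^{-\top}\dx}$. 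I would note up front that merely plugging the Lipschitz constant $L_2 = \norm{\dx}$ of $\dx^\top\bfy$ into \cref{thm:eaGty} yields the weaker bound $f^\om + \tfrac{\norm{\dx}\Flt{n_y}}{2}\sqrt{\lambda_1/\lambda_n}$, which carries a spurious $1/\sqrt{\lambda_n}$ factor; the point of the corollary is to remove it by exploiting the $\Qy$-geometry.

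Second, I would reuse the decomposition already derived in the proof of \cref{thm:eapGty}. With $(\bfx^\am,\hat\bfy^\am)$ the FRV optimum and $(\bfx^\om,\bfy^\om)$ the true optimum (and $\hat\bfy^\om$ the follower's continuous response to $\bfx^\om$), optimality of the FRV solution together with the linearity of $\dx(\bfy)=\dx^\top\bfy$ gives the exact chain $f^\am - f^\om \le \dx^\top(\hat\bfy^\om - \bfy^\om) + \dx^\top(\bfy^\am - \hat\bfy^\am)$; here linearity means there is no Lipschitz slack, and only the two ``continuous-minus-integer'' displacements remain. For each such displacement $\delta$ I would factor the inner product through the Cholesky factor and apply Cauchy--Schwarz in the split form $\dx^\top\delta = (\Rq^{-\top}\dx)^\top(\Rq\delta) \le \norm{\Rq^{-\top}\dx}\,\norm{\Rq\delta}$, which isolates $\norm{\Rq^{-\top}\dx}$ as the common factor and leaves the $\Qy$-weighted (Mahalanobis) length $\norm{\Rq\delta}$ of each displacement to be bounded.

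Third, I would bound $\norm{\Rq\delta}$, that is, the distance between the continuous and integer minimisers of a strictly convex quadratic with Hessian $\Qy$, measured in the $\Qy$-metric. This is precisely the quantity that \cref{thm:proxQuad} controls: its Euclidean bound $\prox[\Qy] \le \frac{\Flt{n_y}}{4}\sqrt{\lambda_1/\lambda_n}$ is obtained by first establishing the Mahalanobis estimate $\norm{\Rq\delta} \le \frac{\Flt{n_y}}{4}\sqrt{\lambda_1}$ (flatness applied to the ellipsoidal sublevel sets of the follower's objective, using that every nonzero integer direction $\mathbf{c}$ satisfies $\norm{\Rq^{-\top}\mathbf{c}} \ge 1/\sqrt{\lambda_1}$) and only then dividing by $\sqrt{\lambda_n}$. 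Working directly in the $\Qy$-metric avoids that final division and is exactly what keeps $\sqrt{\lambda_1}$ rather than $\sqrt{\lambda_1/\lambda_n}$. As in \cref{thm:eapGty}, this bound is indifferent to the optimistic/pessimistic convention, since \cref{thm:proxQuad} holds for any pair of integer and continuous minimisers.

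Finally, the main obstacle is the constant. Summing the two displacement terms naively gives $f^\am \le f^\om + 2\cdot\norm{\Rq^{-\top}\dx}\cdot\frac{\Flt{n_y}}{4}\sqrt{\lambda_1} = f^\om + \frac{\Flt{n_y}\sqrt{\lambda_1}}{2}\norm{\Rq^{-\top}\dx}$, which is a factor $\sqrt2$ weaker than the claimed $\frac{1}{2\sqrt2}$. Closing this gap is the delicate step: it should require a sharper, direction-aware proximity estimate --- bounding $\dx^\top\delta$ by the half-width of the relevant Voronoi cell in the single direction $\Rq^{-\top}\dx$ rather than by the full covering radius in the worst direction, and/or exploiting the factor $\tfrac12$ in the follower's quadratic when turning a lattice-point-free sublevel set into a width bound, which is where the extra $1/\sqrt2$ ought to appear. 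I would therefore concentrate the effort on this refined proximity computation; the remaining steps (the Cholesky identity, the decomposition inherited from \cref{thm:eapGty}, and the split Cauchy--Schwarz) are routine once the sharp $\norm{\Rq\delta}$ bound is in hand.
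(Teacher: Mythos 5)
Your overall route is the same as the paper's: you inherit the decomposition $f^\am - f^\om \le \dx^\top(\hat\bfy^\om - \bfy^\om) + \dx^\top(\bfy^\am - \hat\bfy^\am)$ from the proof of \cref{thm:eapGty}, and you bound each displacement by splitting the inner product through the Cholesky factor. Indeed, the paper's \cref{thm:circVal} is exactly your split Cauchy--Schwarz in disguise, since $\frac{\sqrt{\gamma}}{\norm{\Rq^{-\top}\dx}}\dx^\top\Qy^{-1}\dx = \sqrt{\gamma}\,\norm{\Rq^{-\top}\dx}$, and the paper's \cref{thm:proxVal} is precisely the ``direction-aware proximity estimate'' you call for: it bounds $\dx^\top(\bfu-\bfv)$ by maximizing the linear functional over the ellipsoidal sublevel set of the follower's objective that is guaranteed (by Theorem 3(ii) of \citet{Sankaranarayanan2024}) to contain every integer minimizer.

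The genuine gap is that you stop a factor of $\sqrt{2}$ short of the stated constant and explicitly defer the ``delicate step'' rather than carrying it out; a proof that ends with $f^\om + \tfrac{\Flt{n_y}\sqrt{\lambda_1}}{2}\norm{\Rq^{-\top}\dx}$ does not establish \cref{cor:eaGty}. The paper closes this by substituting $\gamma = \lambda_1\Flt{n_y}^2/32$ into the $\sqrt{\gamma}$ of \cref{thm:circVal}, yielding $\sqrt{\gamma} = \Flt{n_y}\sqrt{\lambda_1}/(4\sqrt{2})$ per displacement and hence $1/(2\sqrt{2})$ in total. You should note, however, that your own accounting exposes a real ambiguity here: \cref{thm:circVal} is stated for the set $\{\bfx : \bfx^\top\Qq\bfx\le\gamma\}$, while \cref{thm:proxVal} invokes it for the set $\{\bfx : \tfrac12\bfx^\top\Qq\bfx\le\gamma\}$; taking the $\tfrac12$ at face value gives $\sqrt{2\gamma}$ per displacement and reproduces exactly your weaker constant. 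So the missing $\sqrt{2}$ lives precisely where you guessed --- in how the $\tfrac12$ in the quadratic is absorbed when converting the lattice-point-containing sublevel set into a support-function bound --- and resolving it requires checking the normalization in the cited proximity theorem rather than any new Voronoi-cell argument. Without that verification your argument only establishes the bound with $\tfrac{\Flt{n_y}\sqrt{\lambda_1}}{2}$ in place of $\tfrac{\Flt{n_y}\sqrt{\lambda_1}}{2\sqrt{2}}$.
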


Before we prove the theorem,  we state the following lemmata. The proof of these lemmata are provided in the online supplement. 
\begin{lemma}\label{thm:circVal}
Let $\Qq$ be an $n\times n$ positive definite matrix. 
Then, the optimal objective value of the problem 
$
\max\limits_{\bfx} \left\{ \pq^\top \bfx \mid \bfx^\top \Qq \bfx \le \gamma \right\}
$ is 
$= \frac{\sqrt{\gamma}}{\norm{\Rq^{-\top} \pq}} \pq^\top  \Qq ^{-1} \pq$, where $\Rq$ is a matrix such that $\Qq  = \Rq^\top \Rq$, \emph{i.e.,} $\Rq$ is  the Cholesky decomposition of $\Qq$.
\end{lemma}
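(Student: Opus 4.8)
The plan is to reduce this problem to maximizing a linear functional over a Euclidean ball by using the Cholesky factor to turn the ellipsoidal constraint into a spherical one, and then to invoke the Cauchy--Schwarz inequality. Since $\Qq$ is positive definite, the feasible region $\{\bfx : \bfx^\top \Qq \bfx \le \gamma\}$ is a compact ellipsoid, so the linear objective attains its maximum; unless $\pq = 0$ (a trivial case with value $0$), this maximum is attained on the boundary ellipsoid $\bfx^\top \Qq \bfx = \gamma$.

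First I would perform the change of variables $\bfz = \Rq \bfx$, where $\Qq = \Rq^\top \Rq$ is the Cholesky factorization. Because $\Rq$ is invertible, this map is a bijection, and it transforms the constraint into $\norm{\bfz}^2 = \bfz^\top \bfz \le \gamma$ and the objective into $\pq^\top \Rq^{-1}\bfz = (\Rq^{-\top}\pq)^\top \bfz$. Thus the problem is equivalent to maximizing the linear functional $\bfz \mapsto (\Rq^{-\top}\pq)^\top \bfz$ over the ball of radius $\sqrt{\gamma}$ centered at the origin.

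Next I would apply the Cauchy--Schwarz inequality: for any $\bfz$ with $\norm{\bfz} \le \sqrt{\gamma}$ we have $(\Rq^{-\top}\pq)^\top \bfz \le \norm{\Rq^{-\top}\pq}\,\norm{\bfz} \le \sqrt{\gamma}\,\norm{\Rq^{-\top}\pq}$, with equality attained at $\bfz^\star = \sqrt{\gamma}\,\Rq^{-\top}\pq / \norm{\Rq^{-\top}\pq}$ (which is exactly the point where $\bfz$ is aligned with $\Rq^{-\top}\pq$ and saturates the norm bound). Hence the optimal value equals $\sqrt{\gamma}\,\norm{\Rq^{-\top}\pq}$.

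Finally, to match the stated form, I would use the identity $\norm{\Rq^{-\top}\pq}^2 = \pq^\top \Rq^{-1}\Rq^{-\top}\pq = \pq^\top (\Rq^\top \Rq)^{-1}\pq = \pq^\top \Qq^{-1}\pq$, which lets me rewrite $\sqrt{\gamma}\,\norm{\Rq^{-\top}\pq} = \frac{\sqrt{\gamma}}{\norm{\Rq^{-\top}\pq}}\pq^\top \Qq^{-1}\pq$, exactly the claimed expression. There is no genuine obstacle here; the only points requiring a little care are the degenerate case $\pq = 0$ and the verification of this last algebraic identity, which is what recasts the clean answer $\sqrt{\gamma}\,\norm{\Rq^{-\top}\pq}$ into the form appearing in the statement. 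An equivalent route would be a KKT/Lagrangian argument (stationarity gives $\pq = 2\lambda \Qq \bfx$, so $\bfx^\star = \frac{\sqrt{\gamma}}{\sqrt{\pq^\top \Qq^{-1}\pq}}\Qq^{-1}\pq$), which produces the maximizer and value directly.
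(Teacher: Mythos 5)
Your proposal is correct and follows essentially the same route as the paper: the change of variables via the Cholesky factor to convert the ellipsoid into a ball, maximization of the linear functional by an aligned vector (your Cauchy--Schwarz step is the same argument), and the algebraic identity $\norm{\Rq^{-\top}\pq}^2 = \pq^\top \Qq^{-1}\pq$ to recover the stated form. The only additions beyond the paper's proof are your explicit handling of the degenerate case $\pq = 0$ and the optional KKT remark, neither of which changes the substance.
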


\begin{proof}[Proof of \cref{thm:circVal}]
    Since $\Qq$ is positive definite, there exists an invertible matrix $\Rq$ such that $\Qq = \Rq^\top \Rq$. 
    Thus, the problem can be rewritten as 
    $
\max\limits_{\bfx} \left\{ \pq^\top \bfx \mid \bfx^\top \Rq^\top \Rq \bfx \le \gamma \right\}
= 
\max\limits_{\bfy} \left\{ \pq^\top \Rq^{-1} \bfy \mid \bfy^\top \bfy \le \gamma \right\}
$, after the substitution $\Rq \bfx = \bfy$. 
This is now a problem of finding a vector $\bfy$ whose $\ell_2$ norm is at most $\sqrt{\gamma}$, and whose dot product with $\Rq^{-\top} \pq$ is maximised.
For a fixed norm, the dot product is maximized by a parallel vector, appropriately scaled. 
So, the maximum is achieved by $\bfy = \sqrt{\gamma} \Rq^{-\top}\pq / \norm{\Rq^{-\top}\pq}_2$. 
Substituting back, the maximum value is 
$
\pq^\top \Rq^{-1} \left( 
 \sqrt{\gamma} \Rq^{-\top}\pq / \norm{\Rq^{-\top}\pq}_2
\right)
$
$
= \frac{\sqrt{\gamma}}{\norm{\Rq^{-\top} \pq}} \pq^\top  \left( \Rq^{\top} \Rq \right ) ^{-1} \pq
= \frac{\sqrt{\gamma}}{\norm{\Rq^{-\top} \pq}} \pq^\top  \Qq ^{-1} \pq
$.
\end{proof}

\begin{lemma} \label{thm:proxVal}
  Let $\Qq$ be an $n\times n$ positive definite matrix. Then, $\max_{\pq\in\R^n} \max _{\bfu\in\R^n, \bfv\in\Z^n} \pq ^\top \left( \bfu - \bfv \right) $ subject to \cref{eq:prox:constrC,eq:prox:constr} is at most 
  $= \frac{\Flt{n}\sqrt{\lambda_1}}{4 \sqrt{2}\norm{\Rq^{-\top} \pq}} \pq^\top  \Qq ^{-1} \pq$, where
  $\lambda_1$ is the largest eigen value of $\Qq$, 
  $\Flt{n}$ is the flatness constant,  and 
  $\Rq$ is a matrix such that $\Qq  = \Rq^\top \Rq$, \emph{i.e.,} $\Rq$ is  the Cholesky decomposition of $\Qq$.
\end{lemma}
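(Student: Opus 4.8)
The plan is to rewrite the inner optimisation as the maximisation of a linear functional over the \emph{proximity vector} $\bfw:=\bfu-\bfv$, confine $\bfw$ to an origin-centred ellipsoid $\{\bfw:\bfw^\top\Qq\bfw\le\gamma\}$, and then evaluate the resulting ellipsoidal optimisation using \cref{thm:circVal}. Throughout I treat $\pq$ as fixed, since the right-hand side depends on $\pq$ and the statement is really a per-$\pq$ bound over the feasible triples.

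First I would unpack the constraints \cref{eq:prox:constrC,eq:prox:constr}. For any $\dq$ the continuous minimiser is the unique point $\bfu=-\Qq^{-1}\dq$, and completing the square gives $\frac12\bfy^\top\Qq\bfy+\dq^\top\bfy=\frac12(\bfy-\bfu)^\top\Qq(\bfy-\bfu)+\mathrm{const}$, so the integer minimiser $\bfv$ is precisely a lattice point nearest to $\bfu$ in the metric $\norm{\cdot}_\Qq:=\sqrt{(\cdot)^\top\Qq(\cdot)}$ induced by $\Qq$. Hence the objective equals $\pq^\top(\bfu-\bfv)=\pq^\top\bfw$, and as $\dq$ (equivalently $\bfu$) ranges over $\R^n$ the vector $\bfw$ ranges over the displacements from arbitrary points to their nearest lattice point, i.e. over the Voronoi cell of the origin.

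The crux is to show that every such $\bfw$ satisfies $\bfw^\top\Qq\bfw\le\gamma$ with $\gamma=\frac{1}{32}\Flt{n}^2\lambda_1$, equivalently $\norm{\bfw}_\Qq\le\frac{\Flt{n}\sqrt{\lambda_1}}{4\sqrt2}$. This is exactly the $\Qq$-norm proximity estimate underlying \cref{thm:proxQuad}: the open ellipsoid $\{\bfy:\norm{\bfy-\bfu}_\Qq<\norm{\bfw}_\Qq\}$ contains no lattice point, because $\bfv$ is the nearest one, so by the flatness theorem its lattice width is at most $\Flt{n}$; writing the width of this ellipsoid along an integer direction $\bfz$ as $2\norm{\bfw}_\Qq\sqrt{\bfz^\top\Qq^{-1}\bfz}$ and bounding $\sqrt{\bfz^\top\Qq^{-1}\bfz}\ge\norm{\bfz}_2/\sqrt{\lambda_1}\ge1/\sqrt{\lambda_1}$ controls $\norm{\bfw}_\Qq$ in terms of $\Flt{n}$ and $\lambda_1$.

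Once $\bfw$ is confined to $\{\bfw:\bfw^\top\Qq\bfw\le\gamma\}$, the inner maximum is bounded above by $\max\{\pq^\top\bfw:\bfw^\top\Qq\bfw\le\gamma\}$, which \cref{thm:circVal} evaluates to $\frac{\sqrt\gamma}{\norm{\Rq^{-\top}\pq}}\pq^\top\Qq^{-1}\pq$; substituting $\sqrt\gamma=\frac{\Flt{n}\sqrt{\lambda_1}}{4\sqrt2}$ yields the claim, and one checks $\norm{\Rq^{-\top}\pq}=\sqrt{\pq^\top\Qq^{-1}\pq}$ so the two lemmata compose cleanly. The main obstacle is the constant in $\gamma$: the direct flatness computation above only yields $\norm{\bfw}_\Qq\le\frac12\Flt{n}\sqrt{\lambda_1}$, and sharpening this to $\frac{1}{4\sqrt2}\Flt{n}\sqrt{\lambda_1}$ requires the refined covering-radius (symmetric) form of the flatness estimate used to prove \cref{thm:proxQuad}, together with careful bookkeeping of the factors relating the $\ell_2$- and $\Qq$-norm proximity statements.
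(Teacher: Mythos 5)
Your proposal follows essentially the same route as the paper's proof: reduce the objective to a linear functional of the proximity vector $\bfw=\bfu-\bfv$, confine $\bfw$ to the ellipsoid $\frac12\bfw^\top\Qq\bfw\le\gamma$ with $\gamma=\lambda_1\Flt{n}^2/32$, and evaluate via \cref{thm:circVal}; the paper obtains the containment step by directly citing Theorem~3(ii) of \citet{Sankaranarayanan2024}, which is exactly the ``refined estimate underlying \cref{thm:proxQuad}'' that you correctly identify as the ingredient your bare flatness computation (yielding only $\norm{\Rq\bfw}_2\le\Flt{n}\sqrt{\lambda_1}/2$) cannot supply. One remark on the constant you flag as the main obstacle: the cited containment is stated for the ellipse $\frac12\bfx^\top\Qq\bfx\le\gamma$, i.e.\ $\bfx^\top\Qq\bfx\le2\gamma$, whereas \cref{thm:circVal} is stated for $\bfx^\top\Qq\bfx\le\gamma$, so a literal composition gives $\sqrt{2\gamma}=\Flt{n}\sqrt{\lambda_1}/4$ in the numerator rather than the $\sqrt{\gamma}=\Flt{n}\sqrt{\lambda_1}/(4\sqrt2)$ the paper writes --- your instinct that the $4\sqrt2$ requires careful bookkeeping between the $\ell_2$ and $\Qq$-norm forms of the proximity statement is well placed.
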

\begin{proof}[Proof of \cref{thm:proxVal}]
  From \citet[Theorem 3 (ii)]{Sankaranarayanan2024}, we know that for a quadratic function
  $\frac{1}{2} \bfx^\top \Qq \bfx $
  , whose minimum is at the origin, the integer minima are located within the ellipse given by $\frac{1}{2} \bfx^\top \Qq \bfx \le \gamma$, where $\gamma=\frac{\lambda_1 \Flt{n}^2}{32}$.
  But from \cref{thm:circVal}, the maximum value of the $\pq^\top \bfx$ over this ellipse is $ f^\star = \frac{\sqrt{\gamma}}{\norm{\Rq^{-\top} \pq}} \pq^\top  \Qq ^{-1} \pq$. 
  Thus the largest value $\pq^\top \bfv$ can take is $f^\star$.
  Since the continuous minimum is the origin, the only value of $\pq^\top \bfu = 0$. Thus the maximum difference as needed is indeed $f^\star$.
  However, the choice that the continuous minimum is at the origin is arbitrary.
  Translating the continuous minimum also translates the maximum distance from the integer minimum, but does not affect the difference between the two.
\end{proof}
\noindent With the above two lemmata, we can prove \cref{cor:eapGty}.

\begin{proof}[Proof of \cref{cor:eapGty}]
Using notations analogous to that in the proof of \cref{thm:eapGty}, we have
  \begin{subequations}
  \begin{align}
    \fx(\bfx^\am, \hat \bfy^\am) 
      \quad&=\quad 
       \hx (\bfx^\am) + \dx ^\top \hat \bfy ^\am \\
       \quad&\leq\quad 
       \hx (\bfx^\om) + \dx ^\top \hat \bfy^\om \\
       \quad&=\quad
       \hx (\bfx^\om) + \dx^\top \left( \hat \bfy^\om - \bfy^\om  \right) + \dx^\top \bfy^\om
      \\
      \quad&\leq\quad 
       \hx (\bfx^\om) + 
   \frac{\Flt{n}\sqrt{\lambda_1}}{4 \sqrt{2}\norm{\Rq^{-\top} \dx}} \dx^\top  \Qy ^{-1} \dx
       + \dx^\top \bfy^\om
      \\
      \quad&=\quad
      f^\om 
   +\frac{\Flt{n}\sqrt{\lambda_1}}{4 \sqrt{2}\norm{\Rq^{-\top} \dx}} \dx^\top  \Qy ^{-1} \dx,
    \end{align}
  \end{subequations}
  where $\Rq$ is the Cholesky decomposition of $\Qy$. \emph{i.e., } $\Qy = \Rq^\top \Rq$, and $\lambda_1$ is the largest eigenvalue of $\Qy$.
  Here, the first inquality is due to the optimality of $(\bfx^\am, \hat \bfy^\am)$ for the FRV. 
  The equality in the following line is because of adding and subtracting $\dx^\top \bfy^\om$ terms. 
  The following inequality is due to \cref{thm:proxVal}. 
  The equality in the end is the definition of $f^\om$.
  Now, we know that 
\begin{subequations}
  \begin{align}
    \fx(\bfx^\am, \bfy^\am) \quad&=\quad 
       \hx (\bfx^\am) + \dx ^\top \bfy^\am \\
       \quad&=\quad
       \hx (\bfx^\am) + \dx^\top\left ( \bfy^\am -  \hat \bfy^\am  \right ) + \dx ^\top \hat \bfy^\am
       \\
       \quad&\leq\quad
       f^\om 
+\frac{\Flt{n}\sqrt{\lambda_1}}{2 \sqrt{2}\norm{\Rq^{-\top} \dx}} \dx^\top  \Qy ^{-1} \dx,
  \end{align} 
  where the inequality is due to the application of  \cref{thm:proxVal}. 
 \end{subequations}
\end{proof}
We note that a linear function $\dx^\top \bfy$ is indeed Lipschitz continuous with a constant $\norm{\dx}_2$.
Thus, a trivial bound for the case in \cref{cor:eapGty} is obtained by applying \cref{thm:eaGty} with $L_2 = \norm{\dx}_2$.
However, the bound provided in \cref{cor:eapGty} is tighter than the trivial bound, but is only valid in the special case.

\subsubsection{Polynomial-time approximation algorithm without any oracle access.}
Now, we consider purely polynomial-time algorithms that do not require any oracle access to solve $NP$-complete problems. 
We note that, unless $NP = \sigpt$, there does not even exist a certificate and a polynomial-time algorithm to verify the feasibility of a given solution $(\bfx, \bfy)$ to \cref{eq:Sklbrg-Quad-P} or \cref{eq:Sklbrg-Quad-O}.
So, in this part of the manuscript, we provide only a leader's decision $\bfx$. 
In many situations, the fundamental difficulty in a bilevel program is to identify the optimal leader's decision. 
Thus, we provide the following theorem, which provides approximation guarantees on when the FRV can be solved approximately. 

\begin{theorem}[Polynomial-time approximation algorithm] \label{thm:apx}
  Let an instance of \cref{eq:Sklbrg-Quad-O} or \cref{eq:Sklbrg-Quad-P} be given where $\dx$ is an $\ell_2$-Lipschitz continuous function over the feasible set of the leader. 
  Let $\Qy$ be a positive definite matrix. 
  Further, suppose that the FRV is solved approximately to get a solution $(\bfx^\am, \hat \bfy^\am)$ such that if $\hat f^\om$ is the optimal objective of the FRV, the obtained solution has an objective at most $\alpha \hat f^\om + \beta$ for some $\alpha \ge 1$ and $\beta \ge 0$. 
  Then, if the leader plays the strategy $\bfx^\am$ and the follower responds optimally in the given instance of \cref{eq:Sklbrg-Quad-O} or \cref{eq:Sklbrg-Quad-P}, the objective value of the leader is at most $\alpha f^\om + \beta + (\alpha+1)L_2\prox[\Qy]$ where $f^\om$ is the optimal objective value of the given instance.
\end{theorem}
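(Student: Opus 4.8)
The plan is to reproduce the argument behind \cref{thm:eapGty} almost verbatim, but to insert the approximation factor $(\alpha,\beta)$ at exactly the FRV step and then track how it propagates through the two proximity estimates. Let $(\bfx^\om,\bfy^\om)$ denote the true bilevel optimum with value $f^\om=\hx(\bfx^\om)+\dx(\bfy^\om)$, and let $\hat\bfy^\om$ be the follower's continuous minimiser given $\bfx^\om$. Since $\Qy$ is positive definite this minimiser is unique, so the optimistic and pessimistic FRVs coincide and I may speak of a single FRV optimum $\hat f^\om$.

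The first step is to bound $\hat f^\om$ against $f^\om$ rather than against the algorithm's output. Because $(\bfx^\om,\hat\bfy^\om)$ is feasible to the FRV, I have $\hat f^\om \le \hx(\bfx^\om)+\dx(\hat\bfy^\om)$. Applying the proximity bound of \cref{thm:proxQuad} to the follower's problem at $\bfx^\om$ gives $\norm{\hat\bfy^\om-\bfy^\om}_2 \le \prox[\Qy]$, and Lipschitz continuity of $\dx$ then yields $\dx(\hat\bfy^\om)\le \dx(\bfy^\om)+L_2\prox[\Qy]$. Combining these produces
\[
\hat f^\om \;\le\; f^\om + L_2\prox[\Qy].
\]
Next I would invoke the hypothesised approximate FRV solver: the returned pair $(\bfx^\am,\hat\bfy^\am)$ is FRV-feasible and satisfies $\hx(\bfx^\am)+\dx(\hat\bfy^\am)\le \alpha\hat f^\om+\beta$. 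Substituting the previous display gives
\[
\hx(\bfx^\am)+\dx(\hat\bfy^\am) \;\le\; \alpha f^\om + \alpha L_2\prox[\Qy] + \beta.
\]
Here I would note that positive definiteness of $\Qy$ again forces $\hat\bfy^\am$ to be precisely the follower's continuous minimiser at $\bfx^\am$, which is what the final step needs.

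The last step transfers the estimate to the integer problem. When the leader commits to $\bfx^\am$ and the follower responds optimally over $\Z^{n_y}$, the response $\bfy^\am$ is an integer minimiser of the follower's problem whose continuous counterpart is $\hat\bfy^\am$. A second application of \cref{thm:proxQuad} gives $\norm{\bfy^\am-\hat\bfy^\am}_2 \le \prox[\Qy]$, and Lipschitz continuity yields $\dx(\bfy^\am)\le \dx(\hat\bfy^\am)+L_2\prox[\Qy]$. Hence
\[
f^\am = \hx(\bfx^\am)+\dx(\bfy^\am) \;\le\; \hx(\bfx^\am)+\dx(\hat\bfy^\am)+L_2\prox[\Qy] \;\le\; \alpha f^\om + \beta + (\alpha+1)L_2\prox[\Qy],
\]
which is the claimed bound; setting $\alpha=1,\beta=0$ recovers the ex-ante guarantee \cref{thm:eaGty}.

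I do not anticipate a genuine obstacle, since every inequality reuses machinery already in place. The one point requiring care is the ordering in the first step: the approximation factor multiplies the \emph{FRV} value, so I must bound $\hat f^\om$ by $f^\om + L_2\prox[\Qy]$ \emph{before} applying $(\alpha,\beta)$, which is exactly why the proximity slack at $\bfx^\om$ ends up scaled by $\alpha$ while the slack at $\bfx^\am$ contributes only a single factor, giving the asymmetric coefficient $(\alpha+1)$. The other subtlety worth flagging explicitly is that feasibility of the approximate FRV solution, together with positive definiteness of $\Qy$, is what guarantees $\hat\bfy^\am$ is the continuous minimiser at $\bfx^\am$ so that the second proximity bound applies.
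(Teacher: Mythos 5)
Your proposal is correct and follows essentially the same argument as the paper: FRV-feasibility of $(\bfx^\om,\hat\bfy^\om)$ plus the proximity bound of \cref{thm:proxQuad} at $\bfx^\om$ (scaled by $\alpha$ through the approximate-solver hypothesis), followed by a second proximity application at $\bfx^\am$, yielding the $(\alpha+1)L_2\prox[\Qy]$ term. The only difference is presentational --- you build the chain of inequalities bottom-up from $\hat f^\om$ while the paper unrolls it top-down from $\fx(\bfx^\am,\bfy^\am)$ --- which changes nothing of substance.
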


The above theorem provides the guarantees on the original problem under the case where the FRV can be solved approximately in polynomial time. 

\begin{proof}
  Here, let $\bfx^\am$ be the leader's decision in the FRV solved approximately. 
  Let $\bfy^\am$ be the follower's optimal response (with the integer constraints). 
  Now, we can write 
  \begin{subequations}
    \begin{align}
      \fx(\bfx^\am, \bfy^\am) \quad&=\quad \hx(\bfx^\am) + \dx(\bfy^\am) \\
      \quad&=\quad \hx(\bfx^\am) + \dx(\hat \bfy^\am + \bfy^\am - \hat \bfy^\am) \\
      \quad&\leq\quad \hx(\bfx^\am) + \dx(\hat \bfy^\am) + L_2\norm{\bfy^\am - \hat \bfy^\am}_2 \\
      \quad&\leq\quad \hx(\bfx^\am) + \dx(\hat \bfy^\am) + L_2\prox[\Qy] \\
      \quad&\leq\quad \alpha \left( \hx (\bfx^\om) + \dx(\hat \bfy^\om) \right) + \beta + L_2\prox[\Qy] \\
      \quad&=\quad \alpha \left( \hx(\bfx^\om) + \dx (\bfy^\om + \hat \bfy^\om - \bfy^\om ) \right) + \beta + L_2 \prox[\Qy] \label{eq:apxapx:a} \\
    \quad&\leq\quad\alpha \left( \hx(\bfx^\om) + \dx (\bfy^\om) + L_2\norm{\hat \bfy^\om - \bfy^\om } \right) + \beta + L_2 \prox[\Qy] \\
    \quad&\leq\quad\alpha \left( \hx(\bfx^\om) + \dx (\bfy^\om) + L_2\prox[\Qy] \right) + \beta + L_2 \prox[\Qy] \\
    \quad&=\quad \alpha f^\om + \beta + (\alpha+1)L_2\prox[\Qy]
    \end{align}
  \end{subequations}
  Here, the first two equalities follow from the definition and by adding and subtracting $\hat \bfy^\am$. 
  The inequality in the following line is due to the Lipschitz continuity of $\dx$.
  The next inequality is due to \cref{thm:proxQuad}.
  The next equality is due to the assunmption that $(\bfx^\am, \hat\bfy^\am)$ solves the FRV approximately.
  If the inequality holds for the optimal solution, it also holds for $(\bfx^\om, \hat\bfy^\om)$ where $\bfx^\om$ is the true optimal solution for the given instance of \cref{eq:Sklbrg-Quad-O} or \cref{eq:Sklbrg-Quad-P} and $\hat \bfy^\om$ is the continuous minimiser of the follower's problem given the leader's decision $\bfx^\om$.
  The equality, following in \cref{eq:apxapx:a} is because we are adding and subtracting $\bfy^\om$.
  The inequality in the next line is due to the Lipschitz continuity of $\dx$.
  The following inequality is due to \cref{thm:proxQuad}.
  Finally, the equality in the last line follows from the definition of $f^\om$ and rearranging the terms. 
\end{proof}

\section{Extensions to bilevel programs with an integer-linear follower}
In this section, we extend our proximity-based results to
settings where 
\begin{enumerate}
    \item given a leader's decision, the follower solves an integer linear program;
    \item given any $\bfx$ satisfying the constraints of the leader, the follower's problem is feasible.
    \item the leader's and the follower's objectives are misaligned, {\em i.e., } given the form in \cref{eq:Sklbrg-Gen-O}, we require that $\dx(\cdot) = - \dy(\cdot)$.
\end{enumerate}
For preciseness, we provide the form of the problem as below. 
\begin{align}
   \min_{\bfx\in \Z^{n_x}, \bfy \in \Z^{n_y}} \quad&:\quad \hx (\bfx) + \dq^\top \bfy &\text{s.t.} \nonumber \\
   \gx^i (\bfx) \quad&\leq\quad \sx \quad \forall\ i \in \{1,\ldots,m_x\} \nonumber \\
   \bfy \quad&\in\quad \arg \max_{\bfy \in \Z^{n_y}} \left\{ \dq^\top \bfy \,:\, \gy (\bfx) + \Dy \bfy \,\leq\, \sy \right\} \tag{ICB-Lin} \label{eq:Sklbrg-Lin}
 \end{align}
 with the condition that if for some $\bfx = \bar\bfx$, $\gx(\bar\bfx) \le 0$, then there exists a $\bar\bfy$ such that $\gy(\bar \bfx) + \Dy \bar \bfy \le 0$. We also assume that $\gy(\bfx)$ takes integer values for every feasible $\bfx$. We note that the condition that $\gy(\bfx)$ is integer valued for every feasible $\bfx$ is satisfied in a fair number of standard settings. 
For example, if the leader's variables $\bfx$ are constrained to be integers, and the follower's linear constraints are of the form $\Cy \bfx + \Dy \bfy \le \bq$, where $\Cy$ is a matrix with integer constraints,  then this condition is satisfied. 
 
We note that the follower is {\em maximizing} $\dq^\top \bfy$, while the leader is minimizing their objective, with $\dq^\top \bfy$, being the only term in the leader's objective, that has the follower's variables. 
Furthermore, we note that the above property ensures that the solution for the pessimistic and optimistic versions of the  bilevel program always match. {\em i.e., } any feasible solution to the optimistic version is also feasible to the pessimistic version; conversely, any feasible solution to the pessimistic version is also feasible to the optimistic version. Thus, their optimal solutions match too, and it becomes essentially equivalent to solve either of the two problems. 

The results we state in this section also extend naturally, and some times in stronger form, if $\bfy$ is constrained to be mixed-integer as opposed to pure-integer. However, for conciseness, we present the results only for the pure-integer setting. 
Moreover, one can readily observe that if $\hx(\bfx) = \hx^\top \bfx$, $\gx(\bfx) := A\bfx \le \mathbf{g}$, {\em i.e.,} the constraints $\gx(\bfx) \le 0$ define a polyhedron, and $\gy(\bfx) = \Cx \bfx + \bq$, we retrieve the classical integer-bilevel linear program, but with misaligned objectives. 

\begin{theorem}
    Given an instance of \cref{eq:Sklbrg-Lin}, and some $\gamma \in \R$, deciding if the optimal solution to the instance is at most $\gamma$ is $\sigpt$-hard. 
\end{theorem}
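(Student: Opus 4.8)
The plan is to establish $\sigpt$-hardness of \cref{eq:Sklbrg-Lin} by reducing from \SSI, exactly in the spirit of the reduction used in the proof of \cref{thm:QuadIntHard}, but now exploiting the fact that the follower solves an \emph{integer linear} program with constraints (rather than an unconstrained quadratic). Since the follower in \cref{eq:Sklbrg-Lin} is permitted genuine linear constraints $\gy(\bfx) + \Dy\bfy \le \sy$, the ``penalty-term'' machinery of \cref{thm:QuadIntHard} (forcing binary values and emulating constraints via large quadratic penalties) is no longer needed: constraints can be imposed directly. This should make the construction cleaner. Concretely, I would reuse the same collection of variables --- padding items $\bfy^p$, dummy items $\bfy^d$, ordinary items $\bfy^o$, the slack $\bfy_s$, and leader variables $\bfx^p, \bfx^d$ --- together with the constants $Q := \sum_{i=1}^k q_i$ and $B := R + 2^r - 1 + rQ$, and transcribe the essential bilevel program \cref{eq:QuadHardProb:b} almost verbatim. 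The follower's interdiction-style constraints $\bfx^p_i + \bfy^p_i \le 1$, $\bfx^d_i + \bfy^d_i \le 1$ and the two ``sandwich'' constraints bounding the leader's objective between $B-1$ and $B$ fit directly into the form $\gy(\bfx) + \Dy\bfy \le \sy$ with integer-valued $\gy(\bfx)$, as required by the hypotheses of \cref{eq:Sklbrg-Lin}.

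The key obstacle, and the only genuine departure from \cref{thm:QuadIntHard}, is the \emph{misalignment} requirement $\dx(\cdot) = -\dy(\cdot)$: in \cref{eq:Sklbrg-Lin} the follower \emph{maximizes} $\dq^\top\bfy$ while the leader \emph{minimizes} the same quantity, so I cannot give the follower a dummy constant objective (``$1$'') as was done in \cref{eq:QuadHardProb:b}. Instead I must encode the desired pessimistic behaviour directly into a genuine linear follower objective that is the exact negation of the leader's. The plan is to take the leader's objective to be
\begin{align*}
  \hx(\bfx) + \dq^\top\bfy \quad\text{with}\quad \dq^\top\bfy = \sum_{i=0}^{r-1}(Q+2^i)\bfy^p_i + \sum_{i=0}^{r-1} Q\bfy^d_i + \sum_{i=1}^k q_i\bfy^o_i - \bfy_s,
\end{align*}
and $\hx \equiv 0$, so that the follower maximizing $\dq^\top\bfy$ is \emph{exactly} trying to inflate the leader's cost. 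Because the leader minimizes this same linear form while the follower maximizes it, the misaligned follower automatically plays the ``most hurtful'' response among feasible points, which is precisely the pessimistic effect engineered by hand in \cref{thm:QuadIntHard}; the subtracted $-\bfy_s$ term lets the follower choose between leader-values $B$ and $B-1$ and, being a maximizer, the follower prefers $B$ whenever a subset summing to $B-T$ exists. This removes the need for the optimistic/pessimistic side-construction entirely, since \cref{eq:Sklbrg-Lin} has coincident optimistic and pessimistic solutions by the property established just before the theorem.

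With the objective fixed, the two implications mirror the earlier proof. For the \emph{Yes} direction, taking the smallest $S \in [R, R+2^r)$ not expressible as a subset sum, the leader sets $\bfx^p_i = 1$ for $i$ in the binary expansion of $S-R$ and $\bfx^d_i = 1$ otherwise; I would then verify that the follower's best achievable value of the leader's objective is $B-1$, using that $S-1$ \emph{is} a subset sum while $S$ is not. For the \emph{No} direction, every residual target $B - T$ (and $B-T-1$) is a subset sum for each feasible follower choice of $\bfy^p, \bfy^d$, so the maximizing follower always attains leader-value $B$. The main thing to check carefully is that the feasibility condition of \cref{eq:Sklbrg-Lin} holds --- that for every leader choice satisfying $\gx(\bfx)\le 0$ the follower's feasible region is nonempty --- which follows because $\bfy = \mathbf{0}$ (all follower variables zero) satisfies the interdiction constraints and the lower sandwich bound $B-1$ can be met by selecting an appropriate ordinary-item subset, guaranteed precisely by our exclusion of the trivial \SSI instances where $R$ itself is not a subset sum. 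I expect the bookkeeping around the two sandwich constraints and verifying integrality of $\gy(\bfx)$ to be the most delicate part, but the conceptual novelty is confined to the misalignment encoding described above; the remainder transfers directly from \cref{thm:QuadIntHard}.
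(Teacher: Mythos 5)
Your route is genuinely different from the paper's. The paper does not redo a reduction at all: it simply observes that the DNeg bilevel knapsack problem of \citet[Sec 3.3]{caprara_study_2014} already satisfies all three standing conditions of \cref{eq:Sklbrg-Lin} (integer-linear follower, guaranteed follower feasibility, perfectly misaligned objectives), and since that problem is known to be $\sigpt$-hard, the theorem follows in two sentences. Your proposal instead rebuilds a self-contained reduction from \SSI by porting the construction of \cref{thm:QuadIntHard} into the linear setting and encoding the pessimistic behaviour through the misaligned maximizing follower. That is a legitimate and more informative alternative --- it makes the hardness independent of the external reference and shows explicitly how misalignment substitutes for the pessimistic tie-breaking --- at the cost of redoing all the bookkeeping the paper avoids.

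There is, however, one concrete gap. You keep the lower sandwich constraint $L \ge B-1$ (where $L$ denotes $\sum_i (Q+2^i)\bfy^p_i + \sum_i Q\bfy^d_i + \sum_i q_i\bfy^o_i$) and argue that follower feasibility for \emph{every} feasible leader decision --- condition 2 of \cref{eq:Sklbrg-Lin} --- is guaranteed because $R$ is a subset sum. That argument does not go through: the value the follower must hit depends on the leader's blocking pattern, not on $R$ alone. For instance, if the leader sets $\bfx^p_i=1$ for all $i$ and $\bfx^d_i = 0$ for all $i$ (a feasible leader choice, since only $r$ components are set), the follower can only use dummy items, forcing $T = jQ$ with $j \le r$; reaching $L \in \{B-1, B\}$ then requires an ordinary-item subset summing to $R+2^r-1$ or $R+2^r-2$, and in a \textsc{Yes} instance both of these may fail to be subset sums, leaving the follower with an empty feasible set. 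Also note that $\bfy = \mathbf{0}$, which you invoke, violates the lower sandwich constraint. The fix is simple: drop the constraint $L \ge B-1$ entirely. Because your follower \emph{maximizes} $L$ (up to the harmless $-\bfy_s$ term, which a maximizer sets to zero anyway) subject to $L \le B$, it automatically attains $B$ in \textsc{No} instances and at most $B-1$ in \textsc{Yes} instances under the leader's special strategy, $\bfy=\mathbf{0}$ is always feasible, and the reduction closes. With that repair your proof is correct, though substantially longer than the paper's citation-based argument.
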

The DNeg version of the bilevel knapsack problem discussed in \citet[Sec 3.3]{caprara_study_2014} is already in the form of \cref{eq:Sklbrg-Lin}.
{\em i.e.,} it satisfies all the three conditions stated in the beginning of the section. 
However, the version of bilevel knapsack is $\sigpt$, giving a natural proof to the above theorem. 

Now, we provide the approximate algorithm, analogous to \cref{alg:apx}. 
\begin{algorithm}[h]
  \caption{The Relaxed Foresight Algorithm for Integer Linear Follower} \label{alg:apx:lin}
  \begin{algorithmic}[1]
    \Require{An instance of \cref{eq:Sklbrg-Lin} defined by $\hx, \dq, \gx^i\ \forall\ i\in \{1,\ldots,m_x\}, \gy^i\ \forall\ i\in \{1,\ldots,m_y\}, \Dy$ } 
    \Ensure {$(\bfx^\am, \bfy^\am)$ feasible to \cref{eq:Sklbrg-Lin} and \emph{approximately optimal}} 
    \State Solve the FRV of \cref{eq:Sklbrg-Lin}
    to get the optimal solution $(\bfx^\am, \hat \bfy^\am)$ 
    \State Solve the integer linear program given by 
      $\max _{\bfy \in \Z^{n_y}} \dq^\top \bfy $ subject to $\gy(\bfx) + \Dy \bfy \le 0$
    to obtain the optimal solution $\bfy^\am$ \label{alg:apx:lin:foll}
    \State \Return $\bfx^\am, \bfy^\am$ 
  \end{algorithmic}
\end{algorithm}

We analyze the above algorithm using proximity results for integer linear programs, analogous to the analysis in \cref{sec:Quad}.
We provide separate results for two subcases, one which is more useful if there are more variables than constraints for the follower, and another which is more useful if there are more constraints than variables for the follower. 

\subsection{Guarantees for when there are few variables and many constraints}

We first look into a result defining the proximity measure for integer linear programs. This result will be used as a bound for the distance between the continuous and integer optima to the follower's problem.

\begin{proposition}[Proximity \citep{cook1986}] \label{thm:Cook}
 Let $\Dy$ be an integral matrix such that each subdeterminant is at most $\Delta (\Dy)$ in absolute value, $\dy$ and $b$ be vectors such that $\Dy \bfy \le \bq$ has an integral solution and $\min\{\dy^\top \bfy : \Dy \bfy \le \bq\}$ exists. Suppose that both
    \begin{subequations}
    \begin{gather} 
\min \left \{ \dy^\top \bfy \mid 
\Dy \bfy \le \bq  \right \} \label{cook-a}
\\
\min \left \{ \dy^\top \bfy \mid \Dy \bfy \le \bq ; \bfy \in \Z^n\right \} \label{cook-b}
    \end{gather}
    \end{subequations}
    are finite. Then,
    \begin{subequations}
    \begin{align}
        \forall\ \hat \bfy\ \text{optimal to \cref{cook-a}}\  &\exists\ \tilde{\bfy}\ \text{optimal to \cref{cook-b} s.t.}\  \norm{\hat \bfy - \tilde{\bfy}}_\infty \le n\Delta (\Dy) & \text{and}\\
        \forall\ \tilde{\bfy}\ \text{optimal to \cref{cook-b}}\ &\exists\ \hat \bfy\ \text{optimal to \cref{cook-a} s.t.}\ \norm{\hat \bfy - \tilde{\bfy}}_\infty \le n\Delta (\Dy).
    \end{align}
    \end{subequations}   
 \end{proposition}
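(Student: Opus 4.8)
The statement is the classical proximity theorem of Cook, Gerards, Schrijver and Tardos, and the plan is to reproduce its cone-decomposition-and-rounding proof. I would fix an optimal continuous solution $\hat\bfy$ to \cref{cook-a} and an optimal integral solution $\tilde\bfy$ to \cref{cook-b}; both exist and are finite by hypothesis, and since \cref{cook-a} is a relaxation of \cref{cook-b} we have $\dy^\top\hat\bfy \le \dy^\top\tilde\bfy$. Writing the rows of $\Dy$ as $a_1^\top,\ldots,a_m^\top$ and setting $g := \hat\bfy - \tilde\bfy$, I would partition the row indices into $P := \{i : a_i^\top g > 0\}$ and $N := \{i : a_i^\top g \le 0\}$, so that $g$ lies in the polyhedral cone $C := \{u \in \R^n : a_i^\top u \ge 0 \ (i\in P),\ a_i^\top u \le 0 \ (i\in N)\}$.

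The core step is to decompose $g$ inside $C$. By the conic form of Carath\'eodory's theorem, $g = \sum_j \lambda_j r_j$ with $\lambda_j \ge 0$ and at most $n$ extreme rays $r_j$ of $C$. Each extreme ray is the (scaled) solution of a full-rank system obtained by making $n-1$ of the inequalities defining $C$ tight; by Cramer's rule one may take $r_j$ integral with every coordinate equal to an $(n-1)\times(n-1)$ subdeterminant of $\Dy$, whence $\norm{r_j}_\infty \le \Delta(\Dy)$. Any lineality space of $C$ is handled by adjoining a basis together with its negatives, which changes neither the count $n$ nor the subdeterminant bound.

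Next I would round the coefficients: write $\lambda_j = \floor{\lambda_j} + \mu_j$ with $0 \le \mu_j < 1$, and form the integral point $\bfy' := \tilde\bfy + \sum_j \floor{\lambda_j} r_j$ and the continuous point $\hat\bfy' := \hat\bfy - \sum_j \floor{\lambda_j} r_j$; note that $\bfy' = \hat\bfy - \sum_j \mu_j r_j$ and $\hat\bfy' = \tilde\bfy + \sum_j \mu_j r_j$, so the two shifts share the residual $\sum_j \mu_j r_j$. Using that $a_i^\top r_j \ge 0$ on $P$ and $\le 0$ on $N$, together with $0 \le \floor{\lambda_j} \le \lambda_j$ and $0 \le \mu_j \le \lambda_j$, one checks row by row that $\Dy\bfy' \le \bq$ (so $\bfy'$ is feasible for \cref{cook-b}) and $\Dy\hat\bfy' \le \bq$ (so $\hat\bfy'$ is feasible for \cref{cook-a}). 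Optimality of $\tilde\bfy$ then forces $\sum_j \floor{\lambda_j}\dy^\top r_j \ge 0$, while optimality of $\hat\bfy$ forces $\sum_j \floor{\lambda_j}\dy^\top r_j \le 0$; hence this quantity is zero, so $\bfy'$ is optimal for \cref{cook-b} and $\hat\bfy'$ is optimal for \cref{cook-a}. Since the residual has at most $n$ terms with each $\mu_j < 1$ and $\norm{r_j}_\infty \le \Delta(\Dy)$, both $\norm{\hat\bfy - \bfy'}_\infty$ and $\norm{\hat\bfy' - \tilde\bfy}_\infty$ are at most $n\Delta(\Dy)$, which delivers the two claimed directions simultaneously.

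The main obstacle is the second paragraph: justifying that the decomposition needs at most $n$ rays and that each ray can be taken integral with $\ell_\infty$-norm bounded by $\Delta(\Dy)$. This is exactly where the subdeterminant hypothesis enters, through Cramer's rule applied to the $(n-1)\times(n-1)$ minors, and it is also where a possible lineality space of $C$ must be accommodated correctly. The feasibility and optimality bookkeeping in the last paragraph is then routine once the sign partition $P, N$ is fixed, and the fact that both shifts share the single residual $\sum_j \mu_j r_j$ is what lets one construction prove both proximity directions at once.
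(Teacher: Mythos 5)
The paper does not prove \cref{thm:Cook} at all: it is imported verbatim from \citet{cook1986} as a known result, so there is no in-paper argument to compare against. Your proposal correctly reconstructs the original Cook--Gerards--Schrijver--Tardos proof (sign-partition of the rows, conic Carath\'eodory decomposition of $\hat\bfy-\tilde\bfy$ into at most $n$ integral generators of $\ell_\infty$-norm at most $\Delta(\Dy)$ via Cramer's rule, rounding down the coefficients, and the feasibility/optimality bookkeeping that yields both directions from the shared residual), and all the steps you outline go through as stated.
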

We note that the bound as provided above, does not depend on the RHS $\bq$ of the constraint. In other words, if the RHS $\bq$ varies, the same bound continues to hold and remains valid. 
We also note that, unlike the result in \cref{thm:proxQuad}, only guarantees the existence of {\em an} integer optimal solution that is sufficiently proximal, and there may still be other integral optima that are farther away from $\hat \bfy$.

The existence of an integral solution $\tilde \bfy$ to the follower's problem in \cref{eq:Sklbrg-Lin} that is proximal to the solution of the FRV $\hat \bfy$ lets us develop a result to prove that the solution $(\bfx^\am,\bfy^\am)$ obtained from \cref{alg:apx:lin} approximates the true solution $(\bfx^\om,\bfy^\om)$. 

\begin{theorem} \label{thm:eapGty-Lnew}
  Let the optimal objective obtained by the \emph{leader} in \cref{eq:Sklbrg-Lin} be $f^\om$. 
 Then, the following bound holds for the objective $f^\am$ obtained using \cref{alg:apx:lin}:
 \begin{subequations}
 \begin{align}
 f^\am \quad&\le\quad f^\om + 2\norm{\dq}_1 n_y \Delta (\Dy)  
 \end{align}
 \end{subequations}
\end{theorem}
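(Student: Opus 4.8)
The plan is to mirror the proof of \cref{thm:eapGty}, replacing its three quadratic ingredients with their integer-linear analogues. The $\ell_2$-proximity bound of \cref{thm:proxQuad} becomes Cook's $\ell_\infty$-proximity bound \cref{thm:Cook}, with value $n_y\Delta(\Dy)$; the role of the $\ell_2$-Lipschitz constant $L_2$ of $\dx$ is played by $\norm{\dq}_1$, since H\"older's inequality gives $\lvert \dq^\top(\bfu - \bfv)\rvert \le \norm{\dq}_1 \norm{\bfu - \bfv}_\infty$, i.e. the map $\bfy \mapsto \dq^\top\bfy$ is $\ell_\infty$-Lipschitz with constant $\norm{\dq}_1$. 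Writing $\delta := \norm{\dq}_1\, n_y\, \Delta(\Dy)$, the target becomes $f^\am \le f^\om + 2\delta$.

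The structural feature I would exploit first is the misalignment $\dx(\cdot) = -\dy(\cdot)$: the only $\bfy$-dependent term of the leader's objective is exactly (the negative of) the follower's objective $\dq^\top\bfy$. Consequently, for a fixed leader decision all follower-optimal responses share the same value of $\dq^\top\bfy$, so the leader's objective at a fixed $\bfx$ is a well-defined number independent of optimistic/pessimistic tie-breaking, as already noted for \cref{eq:Sklbrg-Lin}. This is precisely what lets me invoke \cref{thm:Cook} even though it asserts only the \emph{existence} of a proximal integer optimum: I never need the particular $\bfy^\am$ returned in \cref{alg:apx:lin:foll} to be close to $\hat\bfy^\am$, only that \emph{some} integer optimum $\tilde\bfy$ is close, after which $\dq^\top\bfy^\am = \dq^\top\tilde\bfy$ since both are follower-optimal.

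With this in hand the argument is two applications of proximity. Let $(\bfx^\om,\bfy^\om)$ be the true optimum, $\hat\bfy^\om$ the continuous follower optimum at $\bfx^\om$, and $(\bfx^\am,\hat\bfy^\am)$ the FRV optimum. First, at $\bfx^\om$, \cref{thm:Cook} supplies an integer follower optimum $\tilde\bfy^\om$ with $\norm{\hat\bfy^\om - \tilde\bfy^\om}_\infty \le n_y\Delta(\Dy)$, and since $\dq^\top\tilde\bfy^\om = \dq^\top\bfy^\om$ this yields $\dq^\top\hat\bfy^\om \le \dq^\top\bfy^\om + \delta$; combining with the FRV-optimality inequality $\hx(\bfx^\am) + \dq^\top\hat\bfy^\am \le \hx(\bfx^\om) + \dq^\top\hat\bfy^\om$ (exactly as in \cref{ineq:quad-hats}) gives $\fx(\bfx^\am,\hat\bfy^\am) \le f^\om + \delta$. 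Second, at $\bfx^\am$, \cref{thm:Cook} supplies an integer follower optimum $\tilde\bfy^\am$ with $\norm{\hat\bfy^\am - \tilde\bfy^\am}_\infty \le n_y\Delta(\Dy)$; using $\dq^\top\bfy^\am = \dq^\top\tilde\bfy^\am$ and H\"older once more gives $f^\am = \hx(\bfx^\am) + \dq^\top\bfy^\am \le \hx(\bfx^\am) + \dq^\top\hat\bfy^\am + \delta = \fx(\bfx^\am,\hat\bfy^\am) + \delta$. Chaining the two bounds produces $f^\am \le f^\om + 2\delta$, as claimed.

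The main obstacle, and the one place the proof genuinely departs from the quadratic template, is precisely the gap between Cook's \emph{existential} guarantee and the \emph{universal} proximity used in \cref{thm:eapGty}: nothing controls which integer optimum the oracle in \cref{alg:apx:lin:foll} returns, and an arbitrary integer optimum may lie far from $\hat\bfy^\am$. I would emphasize that the misaligned-objective structure is exactly what closes this gap, since it collapses the objective value across all follower optima, so existence of one proximal optimum is as good as proximity of the returned one. I also note that, because the continuous relaxation of the follower's \emph{maximization} over-estimates the integer optimum, the second application can be replaced by the direct inequality $\dq^\top\bfy^\am \le \dq^\top\hat\bfy^\am$, giving the sharper one-sided bound $f^\am \le f^\om + \delta$; I keep the symmetric $2\delta$ form to parallel \cref{thm:eapGty}.
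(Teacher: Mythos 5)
Your proposal is correct and follows essentially the same route as the paper's proof: one application of \cref{thm:Cook} at $\bfx^\om$ combined with FRV-optimality to get $\fx(\bfx^\am,\hat\bfy^\am)\le f^\om+\norm{\dq}_1 n_y\Delta(\Dy)$, a second application at $\bfx^\am$ for the remaining term, and the misaligned-objective structure to bridge Cook's existential guarantee to the particular $\bfy^\am$ returned by the oracle, exactly as the paper does. Your closing observation is also valid and slightly sharpens the paper's result: since the relaxed follower \emph{maximizes} $\dq^\top\bfy$ over a superset of the integer feasible set, $\dq^\top\bfy^\am\le\dq^\top\hat\bfy^\am$ holds directly, so the second proximity application is unnecessary and the one-sided bound $f^\am\le f^\om+\norm{\dq}_1 n_y\Delta(\Dy)$ already follows.
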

\begin{proof}
  Let $(\bfx^\om,\bfy^\om)$ be a true optimal solution for \cref{eq:Sklbrg-Lin}. Let $\hat \bfy^\om$ be the continuous minimizer of the follower's problem, given the leader's decision $\bfx^\om$. 
  We know from \cref{thm:Cook} that there exists $ \tilde{\bfy}^\om$ feasible and optimal to the (integer-constrained) follower problem given the leader's decision $\bfx^\om$, such that $\norm{\tilde{\bfy}^\om - \hat \bfy^\om}_\infty \le n \Delta (\Dy)$.

 We note that, while $\tilde \bfy^\om$ is optimal to the follower's problem (given the leader's decision), it is only {\em an} optimal solution, and other optimal solutions to the follower, which are farther away could exist. 
 Let $\tilde \bfy^\om_1, \ldots, \tilde \bfy^\om_k$ be other such optimal solutions. 
 Since they are all optimal, the value of the follower's objective, $\dq^\top \tilde \bfy^\om_i$ is the same for all $i=1,\ldots, k$ and is the same as $\dq^\top\tilde \bfy^\om$. 
 However, due to misaligned objectives between the leader and the follower, the leader's objective is also the same at all these points. 
  
  Now, by the fact that $(\bfx^\am, \hat \bfy^\am)$ are obtained as optimal solutions to the corresponding FRV, we have
  \begin{subequations}
  \begin{align}
       \hx (\bfx^\am) + \dq^\top  \hat \bfy ^\am
       \quad&\leq\quad
       \hx (\bfx^\om) + \dq^\top \hat \bfy^\om \\
            \quad&=\quad
       \hx (\bfx^\om) + \dq^\top\tilde{\bfy}^\om + \dq^\top (\hat \bfy^\om 
    - \tilde {\bfy}^\om
    )\\
       \quad&\leq\quad 
       \hx (\bfx^\om) + \dq^\top\tilde{\bfy}^\om + \norm{\dq}_1 \norm{\hat \bfy^\om - \tilde{\bfy}^\om}_\infty \\
       \quad&\leq\quad
       \hx (\bfx^\om) + \dq^\top \tilde {\bfy}^\om + \norm{\dq}_1 n_y \Delta (\Dy) \\
            \quad&=\quad
       \hx (\bfx^\om) + \dq^\top  {\bfy}^\om + \norm{\dq}_1 n_y \Delta (\Dy) \\
      \quad&=\quad
      f^\om + \norm{\dq}_1 n_y \Delta (\Dy)
    \end{align} \label{eq:t2new-1}
  \end{subequations}
 where $f^\om = \hx (\bfx^\om) + \dq^\top \bfy ^\om $ is the optimum objective value of \cref{eq:Sklbrg-Lin}. 
 
The first inequality in \cref{eq:t2new-1} is due to the fact that $(\bfx^\am, \hat \bfy^\am)$ is feasible and optimal to the FRV, while $(\bfx^\om, \hat \bfy^\om)$ is feasible to the FRV.
The following equality is obtained by adding and subtracting $\tilde \bfy^\om$.
The following equality follows from the H\"older's inequality 
{\em i.e.,} $\left \vert{\dq^\top (\hat \bfy^\om 
    - \tilde {\bfy}^\om
    )} \right \vert \le \norm{\dq}_1 \norm{\hat{\bfy}^\om - \tilde \bfy^\om}_\infty$.
The next inequality follows from \cref{thm:Cook}.
The equality in the penultimate line is because $\bfy^\om$ was the assumed optimum, and it does have the same objective value as $\tilde \bfy^\om$. Finally, the last 
equality is due to the fact that $(\bfx^\am, \tilde \bfy^\am)$ is an integer feasible and optimal solution. 

However, $(\bfx^\am, \hat \bfy^\am)$ is not necessarily feasible to \cref{eq:Sklbrg-Lin} since $\hat \bfy^\am$ may not satisfy the integrality requirements. The corresponding feasible point is $(\bfx^\am, \bfy^\am)$. 
Given $\bfx^\am$, $\hat \bfy^\am$ is the continuous optimum to the lower-level problem, we know from \cref{thm:Cook} that there exists $\tilde{\bfy}^\am$, an integer optimum to the lower-level problem such that $\norm{\hat \bfy^\am - \tilde{\bfy}^\am}_\infty \le n \Delta (\Dy)$.
Thus,
\begin{subequations}
\begin{align}
       \hx (\bfx^\am) + \dq ^\top \bfy ^\am
       \quad&=\quad
       \hx (\bfx^\am) + \dq ^\top \hat \bfy^\am + \dq ^\top \bfy ^\am - \dq ^\top \hat \bfy^\am 
       \\
            \quad&\le\quad
            f^\om + \norm{\dq}_1 n_y \Delta (\Dy) + \dq ^\top \bfy ^\am - \dq ^\top\tilde \bfy^\am  + \dq ^\top(\tilde \bfy^\am -  \hat \bfy^\am) 
            \\
            \quad&=\quad
            f^\om + \norm{\dq}_1 n_y \Delta (\Dy)  + \dq ^\top(\tilde \bfy^\am -  \hat \bfy^\am) 
            \\
       \quad&\leq\quad
            f^\om + \norm{\dq}_1 n_y \Delta (\Dy)  + \norm{\dq}_1 \norm{\tilde \bfy^\am  - \hat \bfy^\am}_\infty
            \\
       \quad&\leq\quad
            f^\om + \norm{\dq}_1 n_y \Delta (\Dy)  + \norm{\dq}_1 n_y \Delta(\Dy)
            \\
       \quad&=\quad
            f^\om + 2\norm{\dq}_1n_y \Delta (\Dy) 
  \end{align} \label{eq:t2-2}
 \end{subequations}
where the first inequality follows from \cref{eq:t2new-1} along with adding and subtracting $\dq^\top \tilde \bfy^\am$.
The equality in the following line is because $\tilde \bfy^\am$ as well as $\bfy^\am$ are follower optimal, and hence have the same follower objective value.
The inequality in the following line is due to the H\"older's inequality.
The next inequality follows from \cref{thm:Cook}.  
\end{proof}
\cref{thm:eapGty-Lnew} shows that the solution obtained from \cref{alg:apx:lin} approximates the true solution, as the objective value from both solutions differs by a bounded amount.

\subsection{Guarantees for when there are many variables and few constraints}
We start this subsection, presenting a theorem from \citet{eisenbrand2018proximity}, which provides a bound on the $\ell_1$ norm distance between the continuous and integer solutions to an integer linear program.
\begin{proposition}[$\ell_1$ norm proximity between relaxed and integer solutions \citep{eisenbrand2018proximity}] \label{thm:E-W}
  Let $\Dy$ and $b$ be integral matrices such that $\delta (\Dy)$ is the upper bound on the absolute values of entries in $\Dy$. Suppose that both
    \begin{subequations}
    \begin{gather} 
\min \left \{ \py^\top \bfy \mid 
\Dy \bfy \le \bq  \right \} \label{one-a}
\\
\min \left \{ \py^\top \bfy \mid \Dy \bfy \le \bq; \bfy \in \Z^n\right \} \label{one-b}
    \end{gather} \label{one-ab}
    \end{subequations} 
    are finite with optimal solutions $\bfy^\am$ and $\bfy^\om$ respectively. Then,
    \begin{subequations}
        \begin{align}
        \forall\ \hat \bfy\ \text{optimal to \cref{one-a}}\ \exists\ \tilde \bfy\ \text{optimal to \cref{one-b} s.t.}\  \norm{\hat \bfy - \tilde \bfy}_1 \le m(2m\delta (\Dy) + 1)^m \\
        \forall\ \tilde \bfy\ \text{optimal to \cref{one-b}}\ \exists\ \hat \bfy\ \text{optimal to \cref{one-a} s.t.}\ \norm{\hat \bfy - \tilde \bfy}_1 \le m(2m\delta (\Dy) + 1)^m 
    \end{align}
    \end{subequations}
    where $m$ is the number of rows in matrix $\Dy$, i.e., the number of constraints in the problem.   
\end{proposition}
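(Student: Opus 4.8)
The plan is to reproduce the Steinitz-lemma argument of \citet{eisenbrand2018proximity}, since the stated bound is exactly their $\ell_1$-proximity result specialised to the inequality system $\Dy \bfy \le \bq$. First I would reduce to a standard equality form: introducing $m$ slack variables rewrites $\Dy \bfy \le \bq$ as $\begin{pmatrix} \Dy & I \end{pmatrix} \begin{pmatrix} \bfy \\ \mathbf{s} \end{pmatrix} = \bq$ with $\mathbf{s} \ge 0$. The number of rows remains $m$, and the largest absolute entry of the augmented matrix is still $\delta(\Dy)$, since the slack columns contribute only entries in $\{0,1\}$. It therefore suffices to prove the bound for the augmented system and then project back, noting that the $\ell_1$ distance in the $\bfy$-coordinates is at most the $\ell_1$ distance over all coordinates.

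Next, fix an LP optimum $\hat \bfy$ and let $\tilde \bfy$ be an integer optimum minimising $\norm{\hat \bfy - \tilde \bfy}_1$ among all integer optima; this is the quantity to bound, and the second, symmetric statement follows by interchanging the roles and repeating the argument. Write $\mathbf{d} = \hat \bfy - \tilde \bfy$. Since both points satisfy the same equality system, $A\mathbf{d} = 0$, where $A$ is the augmented constraint matrix. I would then expand $\mathbf{d}$ into a sequence of vectors: for each coordinate $i$, include $\lceil |\mathbf{d}_i| \rceil$ copies of $\sgn(\mathbf{d}_i)A_i$ (the $i$-th column of $A$), with the final copy scaled by the fractional remainder so that the copies sum to $\mathbf{d}_i A_i$. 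Each vector then has $\ell_\infty$-norm at most $\delta(\Dy)$, and the whole sequence sums to $A\mathbf{d} = 0$.

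I would then invoke the Steinitz lemma to reorder the sequence so that every partial sum lies in the $\ell_\infty$-ball of radius $m\,\delta(\Dy)$, using that the Steinitz constant for the $\ell_\infty$ norm in $\R^m$ equals $m$. The partial sums that are integral then lie in an integer box containing at most $(2m\,\delta(\Dy)+1)^m$ lattice points. The heart of the proof is the pigeonhole/cycle step: if the number of integral partial sums exceeded this count, two of them would coincide, and the block of vectors between them would assemble into an integral kernel vector $\mathbf{c}$ with $A\mathbf{c}=0$. Adding a suitably signed copy of $\mathbf{c}$ to $\tilde \bfy$ produces another integer-feasible point whose objective is no worse (using that $\mathbf{d}$ points from the integer optimum toward the LP optimum, so $\mathbf{c}$ is aligned with a neutral or improving direction) but which is strictly closer to $\hat \bfy$ in $\ell_1$, contradicting the minimality of $\tilde \bfy$. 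The counting then forces the sequence, and hence $\norm{\mathbf{d}}_1$, to have length at most $m(2m\,\delta(\Dy)+1)^m$, which is the claimed bound.

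The main obstacle is the cycle-exchange step, where several things must hold at once: the extracted cycle $\mathbf{c}$ must be genuinely integral (which is why the fractional copies should be grouped so that integrality of a partial sum forces the fractional parts to cancel), the shifted point $\tilde \bfy \pm \mathbf{c}$ must remain feasible (nonnegativity of the relevant coordinates, which the radius-$m\,\delta(\Dy)$ bound must be shown to protect), and optimality must be preserved so that the new point is again an integer optimum. Pinning down the precise constant also requires careful bookkeeping of how many non-integral versus integral partial sums can occur. Crucially, the Steinitz reordering is what makes this count independent of $\bq$ and of the magnitudes of $\hat\bfy$ and $\tilde\bfy$, matching the remark after \cref{thm:Cook} that such proximity bounds do not depend on the right-hand side.
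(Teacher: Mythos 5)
The paper does not actually prove \cref{thm:E-W}: it is quoted from \citet{eisenbrand2018proximity}, and the only original content in the surrounding text is the remark that the cited result is stated for the standard form $A\bfx=\bq,\ \bfx\ge 0$, and that the inequality form \cref{one-ab} is recovered by the substitution $\bfy=\bfx^{+}-\bfx^{-}$ together with slacks, which keeps the number of rows $m$ and only enlarges the number of variables (harmless, since the bound depends only on $m$ and $\delta(\Dy)$). You instead reconstruct the Eisenbrand--Weismantel proof itself, and the skeleton you describe (reduction to an equality system, the multiset of signed and partially scaled columns summing to zero, Steinitz reordering into the $\ell_\infty$-ball of radius $m\,\delta(\Dy)$, pigeonhole on integral prefix sums, cycle extraction and exchange) is indeed the right one. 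That is a legitimate and more self-contained route, but it does far more than the paper attempts, and two of its load-bearing steps are not yet proofs.

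First, your reduction keeps $\bfy$ free, whereas the cited theorem is proved for systems in which \emph{all} variables are nonnegative; the paper splits $\bfy=\bfx^{+}-\bfx^{-}$ precisely for this reason. If you are re-deriving the argument rather than invoking the black box this is tolerable, but then the feasibility bookkeeping in the exchange step changes (only the slack coordinates need protection) and that must be said explicitly. Second, and more seriously, the optimality-preservation step is justified incorrectly: the observation that ``$\mathbf{d}$ points from the integer optimum toward the LP optimum'' does not make an extracted sub-cycle $\mathbf{c}$ a neutral-or-improving direction. The actual argument needs \emph{both} perturbed points to be feasible --- $\tilde\bfy+\mathbf{c}$ for the integer program and $\hat\bfy-\mathbf{c}$ for the relaxation --- so that optimality of $\hat\bfy$ in \cref{one-a} forces $\py^\top\mathbf{c}\le 0$, which then transfers to $\tilde\bfy+\mathbf{c}$ and contradicts the minimality of $\norm{\hat\bfy-\tilde\bfy}_1$. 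You flag this as ``the main obstacle'' but do not supply it, and the same is true of the counting that yields the extra factor of $m$ beyond $(2m\,\delta(\Dy)+1)^m$. Since the proposition is a citation, the economical course is the paper's: invoke \citet{eisenbrand2018proximity} and prove only the reduction to inequality form, rather than leave these essential steps as declared difficulties.
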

In \citet{eisenbrand2018proximity}, the result  is presented for programs in the standard form, i.e., with constraints $Ax=b$ and $\bfx\geq0$. The same can be converted to the form used in \cref{one-ab} by adding variables $\bfx^+,\bfx^-$ and $s$ such that $A(\bfx^+ - \bfx^-) + s = b$ and $\bfx^+, \bfx^-, s \geq 0$. Thus, we have $2n + m$ variables in the problem where $n$ is the number of elements in $\bfy$, while the number of constraints $m$ remains the same. 

While \cref{thm:eapGty-Lnew} shows that \cref{alg:apx:lin} provides an approximation for the true solution of \cref{eq:Sklbrg-Lin}, there might be cases where the bound $n \Delta(\Dy)$ is large and therefore not very helpful in proving that the objective $f^\am$ obtained from \cref{alg:apx:lin} is proximal to the true objective $f^\om$. This can happen when $n_y$, the number of variables in the follower's problem of  \cref{eq:Sklbrg-Lin}, is large. 
\cref{thm:E-W} can be useful in such cases because the proximity bound defined here depends on the number of constraints $m$ rather than the number of variables. 
The following theorem utilises \cref{thm:E-W} to prove that the solution $(\bfx^\am,\bfy^\am)$ obtained from \cref{alg:apx:lin} approximates the true solution $(\bfx^\om,\bfy^\om)$.

\begin{theorem} \label{thm:eapGty-L1new}
  Assume $\gy(\bfx)$ is integer valued for every feasible $\bfx$.
  Now, if the optimal objective obtained by the \emph{leader} in \cref{eq:Sklbrg-Lin} be $f^\om$, 
  the following bound holds for the objective $f^\am$ obtained using \cref{alg:apx:lin}: 
 \begin{subequations}
 \begin{align}
   f^\am \quad&\le\quad f^\om + 2\norm{\dq}_\infty m(2m\delta (\Dy) + 1)^m  \label{thm:eaGty-L1} 
\end{align}
\end{subequations}
  where $m$ is the number of rows in matrix $\Dy$. 
\end{theorem}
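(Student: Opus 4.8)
The plan is to follow the two-phase argument of the proof of \cref{thm:eapGty-Lnew} essentially verbatim in structure, replacing Cook's $\ell_\infty$ proximity (\cref{thm:Cook}) by the Eisenbrand--Weismantel $\ell_1$ proximity (\cref{thm:E-W}), and correspondingly pairing the objective vector in H\"older's inequality as $\lvert \dq^\top \bfw \rvert \le \norm{\dq}_\infty \norm{\bfw}_1$ rather than $\norm{\dq}_1 \norm{\bfw}_\infty$. Since the follower \emph{maximizes} $\dq^\top \bfy$, I would first rewrite its problem as $\min (-\dq)^\top \bfy$ so that \cref{thm:E-W} applies directly; the relevant norm is unchanged since $\norm{-\dq}_\infty = \norm{\dq}_\infty$. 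The hypothesis that $\gy(\bfx)$ is integer valued for every feasible $\bfx$ guarantees that the right-hand side $-\gy(\bfx)$ of the follower's constraint $\Dy \bfy \le -\gy(\bfx)$ is integral, which is exactly what \cref{thm:E-W} requires of $b$; together with the integrality of $\Dy$ this licenses the $\ell_1$ proximity bound $m(2m\delta(\Dy)+1)^m$.

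For the first phase, let $(\bfx^\om, \bfy^\om)$ be a true optimum and $\hat\bfy^\om$ a continuous follower optimum at $\bfx^\om$. By \cref{thm:E-W} there is an integer follower optimum $\tilde\bfy^\om$ at $\bfx^\om$ with $\norm{\hat\bfy^\om - \tilde\bfy^\om}_1 \le m(2m\delta(\Dy)+1)^m$. Because $(\bfx^\am, \hat\bfy^\am)$ is FRV-optimal while $(\bfx^\om, \hat\bfy^\om)$ is FRV-feasible, I would chain $\hx(\bfx^\am) + \dq^\top \hat\bfy^\am \le \hx(\bfx^\om) + \dq^\top \hat\bfy^\om$, then add and subtract $\dq^\top \tilde\bfy^\om$ and apply H\"older to bound the FRV value by $\hx(\bfx^\om) + \dq^\top \tilde\bfy^\om + \norm{\dq}_\infty\, m(2m\delta(\Dy)+1)^m$. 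The misalignment condition $\dx = -\dy$ makes every follower-optimal point at $\bfx^\om$ share the same leader objective, so $\dq^\top \tilde\bfy^\om = \dq^\top \bfy^\om$ and the bound becomes $f^\om + \norm{\dq}_\infty\, m(2m\delta(\Dy)+1)^m$.

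In the second phase I would pass from the (integer-infeasible) FRV point $(\bfx^\am, \hat\bfy^\am)$ to the returned feasible point $(\bfx^\am, \bfy^\am)$. Applying \cref{thm:E-W} at $\bfx^\am$ yields an integer follower optimum $\tilde\bfy^\am$ with $\norm{\hat\bfy^\am - \tilde\bfy^\am}_1 \le m(2m\delta(\Dy)+1)^m$; since $\bfy^\am$ and $\tilde\bfy^\am$ are both follower-optimal at $\bfx^\am$, they have equal follower objective, hence, by misalignment, equal leader objective. Writing $\dq^\top \bfy^\am = \dq^\top \hat\bfy^\am + \dq^\top(\tilde\bfy^\am - \hat\bfy^\am)$ and bounding the last term via H\"older and \cref{thm:E-W}, I would add the two proximity contributions to reach $f^\am \le f^\om + 2\norm{\dq}_\infty\, m(2m\delta(\Dy)+1)^m$, as claimed.

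The main obstacle is the same one that complicates \cref{thm:eapGty-Lnew}: \cref{thm:E-W}, like Cook's theorem, only asserts the existence of \emph{some} proximal integer optimum $\tilde\bfy$, not that the particular $\bfy^\am$ returned by \cref{alg:apx:lin} (or the optimum $\bfy^\om$ of the instance) is itself close to the continuous optimum. The resolution, which must be stated carefully, is that the misalignment assumption $\dx(\cdot) = -\dy(\cdot)$ forces all follower-optimal responses to a fixed leader decision to produce the identical leader objective, so the actual follower response may be freely replaced by the proximal one $\tilde\bfy$ whenever only the leader's objective is being estimated. I would make sure this substitution is invoked at both $\bfx^\om$ and $\bfx^\am$, and that the conversion of the follower's maximization and the integrality of $-\gy(\bfx)$ are flagged before the first application of \cref{thm:E-W}.
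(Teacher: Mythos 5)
Your proposal is correct and follows essentially the same two-phase argument as the paper's proof: the FRV-optimality comparison at $\bfx^\om$, then the passage from $(\bfx^\am,\hat\bfy^\am)$ to $(\bfx^\am,\bfy^\am)$, each contributing one $\norm{\dq}_\infty\, m(2m\delta(\Dy)+1)^m$ term via H\"older's inequality and \cref{thm:E-W}, with the misalignment condition used exactly as you describe to replace an arbitrary follower optimum by the proximal one. Your explicit remarks on converting the follower's maximization to a minimization and on why the integrality of $\gy(\bfx)$ is needed for the right-hand side are small clarifications the paper leaves implicit, but they do not change the route.
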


\begin{proof}[Proof of \cref{thm:eapGty-L1new}]
  Let $\hat \bfy^\om$ be a continuous minimizer of the lower-level problem, given the upper-level decision $\bfx^\om$. 
  We know that from \cref{thm:E-W} that there exists $\norm{{\bfy}^\om - \hat \bfy^\om}_1 \le m(2m\delta (\Dy)+1)^m$, where $ \bfy^\om $ is feasible and optimal to the unrelaxed follower's problem.
  We note that there could be multiple optimal solutions to the follower, not all of them need to be proximal to $\hat \bfy^\om$.
  However, the leader is indifferent among all such follower's optimal solutions because the only term in which the follower's variables appear on the leader's objective functions are the $\dq^\top \bfy$ term, which evaluates to the same value for all follower-optimal solutions.
  Thus, without loss of generality, we can work only with \emph{that} follower-optimal solution that is closest to $\hat \bfy^\om$.
  Now, we have, 
  \begin{subequations}
  \begin{align}
       \hx (\bfx^\am) + \dq^\top \hat \bfy ^\am
       \quad&\leq\quad
       \hx (\bfx^\om) + \dq^\top \hat \bfy^\om \\
       \quad&=\quad 
       \hx (\bfx^\om) + \dq^\top  {\bfy}^\om + 
        {\dq}^\top({\hat \bfy^\om - \bfy^\om}) \\
       \quad&\leq\quad 
       \hx (\bfx^\om) + \dq^\top  {\bfy}^\om + \norm{\dq}_\infty \norm{\hat \bfy^\om - \bfy^\om}_1 \\
            \quad&\leq\quad
      \hx (\bfx^\om) + \dq^\top \bfy^\om + \norm{\dq}_\infty m(2m\delta (\Dy)+1)^m \\
      \quad&=\quad
      f^\om + \norm{\dq}_\infty m(2m\delta (\Dy)+1)^m
    \end{align} \label{eq:t2-1m}
  \end{subequations}
where $f^\om = \hx (\bfx^\om) + \px (\bfy ^\om) $ is the optimum objective value. 
The first inequality in \cref{eq:t2-1m} is due to the fact that $(\bfx^\am, \hat \bfy^\am)$ is feasible and optimal to the FRV, while $(\bfx^\om, \hat \bfy^\om)$ is a feasible point to the FRV.
The equality in the next line is obtained by adding and subtracting $\dq^\top \bfy^\om$.
The inequality in the next line is due to the application of the H\"older's inequality, that $\bfu^\top \bfv \le \norm{\bfu}_\infty \norm {\bfv}_1$. 
The following inequality is due to \cref{thm:E-W}. 
The last equality is because $\bfy^\om$ is the follower's optimal solution.

However, $(\bfx^\am, \hat \bfy^\am)$ is not feasible to \cref{eq:Sklbrg-Lin}. 
The corresponding feasible point is $(\bfx^\am, \bfy^\am)$. 
Given $\bfx^\am$, $\hat \bfy^\am$ is the continuous optimum to the follower's problem, we know from \cref{thm:E-W} that there exists $\tilde{\bfy}^\am$, an integer optimum to the follower's problem such that $\norm{\hat \bfy^\am - \tilde{\bfy}^\am}_1 \le m(2m\delta (\Dy)+1)^m$. 
We note that the integer optimum obtained in \cref{alg:apx:lin:foll} of \cref{alg:apx:lin}, notated as $\bfy^\am$, is not guaranteed to satisfy the bound provided by \cref{thm:E-W}.
However, we know that $\dq^\top \bfy^\am = \dq^\top \tilde{\bfy}^\am$.
Thus,
\begin{subequations}
  \begin{align}
       \hx (\bfx^\am) + \dq^\top \bfy ^\am
       \quad&=\quad
       \hx (\bfx^\am) + \dq^\top \hat \bfy^\am + \dq^\top \bfy ^\am - \dq^\top \hat \bfy^\am 
       \\
            \quad&\le\quad
      f^\om + \norm{\dq}_\infty m(2m\delta (\Dy)+1)^m + \dq^\top \bfy ^\am - \dq^\top\tilde \bfy^\am + \dq^\top\tilde \bfy^\am - \dq^\top \hat \bfy^\am 
            \\
       \quad&\leq\quad
       f^\om + \norm{\dq}_\infty m(2m\delta (\Dy)+1)^m + \dq^\top \bfy ^\am - \dq^\top\tilde \bfy^\am + \norm{\dq}_\infty \norm{\tilde \bfy^\am  - \hat \bfy^\am}_1
            \\
       \quad&\leq\quad
       f^\om + \norm{\dq}_\infty m(2m\delta (\Dy)+1)^m  + \norm{\dq}_\infty m(2m\delta (\Dy)+1)^m
            \\
       \quad&=\quad
       f^\om + 2\norm{\dq}_\infty m(2m\delta (\Dy)+1)^m 
  \end{align} \label{eq:t2-2m} 
\end{subequations}
The equality in the first line is because we add and subtract $\dq^\top \hat \bfy^\am$. 
The inequality in the following line is due to \cref{eq:t2-1m}. 
The following inequality is due to the H\"older's inequality.
The following inequality is due to \cref{thm:E-W}.
The final equality is because the last two terms are the same, completing the proof. 
\end{proof}
\begin{remark}
    The results for an integer-linear lower level requires that the leader's and the follower's objectives are misaligned in the follower's variables. In other words, we needed $\dx(\cdot) = - \dy(\cdot)$. 
    However, no such assumption was required when the lower level was an unconstrained integer convex quadratic minimization problem. 
    This is because of the fundamental difference in the type of proximity results we have in both the cases. In the context of convex quadratic programs, the proximity results gave a bound of the maximum distance between the (unique) continuous minimizer and the {\em farthest} integer minimizer. In contrast, for integer linear programs, the proximity results gave a bound of the maximum distance between any continuous minimizer and its {\em closest} integer minimizer.
\end{remark}

\section{Computational Experiments}
To evaluate the performance of \cref{alg:apx} we run the algorithm on a test bed of instances. 
Due to the absence of standard exact solvers that are customized to solve bilevel programs with a pure-integer convex-quadratic lower level, we implement a complete brute force-based enumeration algorithm that will provide the exact solution. 
We compare the solution time of such an exact algorithm with our approximate algorithm, and also compare the quality of solutions, \emph{i.e.}, the leader's objective function value.
\emph{A priori}, it is obvious that the exact algorithm will take more time and provide better quality solutions (lower objective function value), compared to our approximate algorithm.
The primary insight that the computational tests are expected to provide is the loss in solution quality due to the gain in solution speed in practice, over and above the promised theoretical guarantees in \cref{thm:eapGty,cor:eapGty,thm:apx}.

\subsection{Generation of the testbed. }
For our computational tests, we generate a testbed of 600 instances. 
Of this, there are 300 unique version of the leader's and the follower's problem parameters. 
For each of the 300 unique parameter values, we have one instance, where the follower is assumed to act in an optimistic manner, and another instance where the follower is assumed to act in a pessimistic manner.
All the instances have the following structure. We use the notation in \cref{eq:Sklbrg-Quad-O} to describe the instances.
\begin{enumerate}
  \item In all instances, $\hx$ and $\dx$ are chosen to be linear functions of $\bfx$ and $\bfy$ respectively. The entries of the linear coefficients are randomly generated to be between $-5 $ and $5$ (both inclusive), with equal probability. 
  \item The number of leader variables is always $10$ in all the instances. \emph{i.e., }$n_x = 10$.
  \item The constraints $\gx(\bfx) \le 0$ are generated to be a polyhedron.  The precise form of the constraints is 
    \begin{align*}
      A \bfx \quad&\leq\quad b \\
      \bfx \quad&\in\quad \{0, 1\}^{n_x}. 
    \end{align*}
    In other words, the leader's decision variables are constrained to be a subset of the vertices of the unit cube. 
    The matrix $A$ has each of its entries either $-1$, $0$ or $1$. 
    Moreover, to ensure that the leader is feasible, we randomly generate a binary vector, $\bar \bfx$, and choose $b = A\bar \bfx$, so that at least this one point $\bfx = \bar\bfx$ is feasible for the leader. 
  \item In all instances, the follower parameters, $\Cy$ and $\dy$ are chosen to be matrices of appropriate sizes whose entries  are randomly generated to be between $-9 $ and $9$ (both inclusive), with equal probability. 
  \item We use three different methods to generate the positive-definite matrix $\Qy$ appearing in the objective of the follower. 
    \begin{itemize}
      \item{\bf Diagonal. } In these set of instances, we choose $\Qy$ as a diagonal matrix with positive entries in the diagonal. Each of the diagonal elements entries is randomly generated to be between $1$ and $9$, all equally likely. All off-diagonal entries are $0$. Clearly, by construction, these matrices are positive definite.
      \item{\bf Cholesky-based. } In these set of instances, we randomly generate a matrix $R$ whose entries are randomly generated to be $-1$, $0$ or $1$ with equal probability. 
        We then construct $\Qy = R^\top R + I$, where $I$ is the identity matrix. 
        One can readily observe that $R^\top R$ will be a positive semi-definite matrix. 
        We add  an identity matrix to this to ensure that all eigenvalues are bounded away from $0$, guaranteeing positive definiteness.
        Since, $R$ has all its entries as integers, $\Qy$ also has all its entries as integers.
      \item{\bf Bounded Eigenvalues. }  While the diagonally generated $\Qy$  had all its eigenvalues between $1$ and $9$, there was no guarantee on the eigen values of $\Qy$, when generated using the Cholesky-based method.
        This lead to some of the matrices having very large eigen values. 
        In this method, we generate a diagonal matrix $D$ with all its diagonal entries being randomly chosen between $1$ and $9$. 
        Then, using an implementation of the Haar's algorithm \citep{Mezzadri2006}, to generates a random orthogonal matrix, $U$. 
        Now, we define $\Qy = UDU^\top$.
        Clearly, the eigenvalues of $\Qy$ are the diagonal elements of $D$, which are all between $1$ and $9$.
        Now, the eigenvales of $\Qy$ are bounded, but the random orthogonal matrix $U$ need not (and in general does not) have integer entries. 
        Thus, the matrix $\Qy$ generated using this method does not have integer entries. 
        We store each of the entries up to fifteen decimal places. 
        Bounds due to Ger\v{s}gorin's theorem \citep[Chapter 6]{Horn2012} imply that $\Qy$ does remain positive definite, with eigen values sufficiently close to the diagonal entries of $D$,  despite the truncation and floating point errors. 
    \end{itemize}
  \item For each of the three methods to generate the positive-definite matrix $\Qy$, we choose two sizes, $n_y = 10$ and $n_y = 20$. For each of the six combinations of $\Qy$ generation method and $n_y$, we generate 50 instances. This results in a total of $50\times6\times 2=600$ instances, including the optimistic and pessimistic versions.
 \item We also note that the run time for the {\em Diagonal} instances, {\em i.e.,} instance where $\Qy$ is a diagonal matrices are particularly fast to solve. This is because, the follower's optimization problem is close to trivial in this case. One has to solve $n_y$ number of single dimensional integer convex quadratic minimization problem, and the solution to that is the rounding of the continuous solution. These instances are of particular interest to systematically evaluate the sub-optimality of the approximation algorithm, rather than evaluate the speed. 
\end{enumerate}

\subsection{The testing procedure. }
For each of the 600 instances, we compute the exact solution using a brute-force enumeration algorithm. In this enumeration, we generate all $2^{n_x} = 2^{10} = 1024$ binary vectors, and check if they satisfy all the constraints $Ax \le b$.
If a vector $\bar \bfx$ satisfies all these constraints, then we solve the follower's optimization problem, given the leader's decision $\bfx$, keeping in mind the optimistic/pessimistic assumptions.
The latter is achieved in the solver of our choice, Gurobi \citep{GurobiOptimization2023}, using its ability to handle a \emph{secondary objective function}. 
The secondary objective function (the leader's objective function) is minimized/maximized , among the possibly multiple optimal solutions to the primary objective function, which is the follower's objective function.
This is iterated over every feasible point of the leader's problem. 
A maximum time limit of $120$ seconds is provided. 
If the problem is completed within the time limit, the total time taken is stored. 
The best objective obtained is always stored. 

Then, we run our approximate algorithm. The FRV is a mixed-integer linear program. We store the time taken to solve the FRV. Then, given the leader's solution to the FRV, we solve the follower's problem. This is a pure-integer convex quadratic program. 
Moreover, the follower's best response problem is solved, keeping in mind the optimistic/pessimistic assumptions, and using Gurobi's secondary objective function feature.
We store the time taken to solve the follower's response problem too. 
To evaluate the performance as per \cref{thm:eapGty,cor:eapGty}, the time taken is the sum of the time taken to solve the FRV and to solve the follower's best response. 
To evaluate the performance as per \cref{thm:apx}, the time taken is only the time taken to solve the FRV. 
We also store the objective value of the leader as obtained from the approximate solution.

\subsection{Results. }

The results of our computational experiments are presented in this section. We analyze the performance of our approximation algorithm in terms of two key metrics: time efficiency and solution quality.

\subsubsection{Computational Time Analysis}
To evaluate the time efficiency of \cref{alg:apx}, we compare the times taken by the brute-force enumeration algorithm described above, with \cref{alg:apx}. This comparison is visualized in \cref{fig:performance_profile}. Since the instances with a diagonal $\Qy$ are significantly faster to solve due to a near-trivial follower's problem (see point 7 in Section 6.1) in both the exact algorithm and \cref{alg:apx}, they have been excluded from this analysis. Therefore, the computational time analysis is done only based on the remaining 400 instances. 

\begin{figure}
  \centering
  \includegraphics[width=0.8\textwidth]{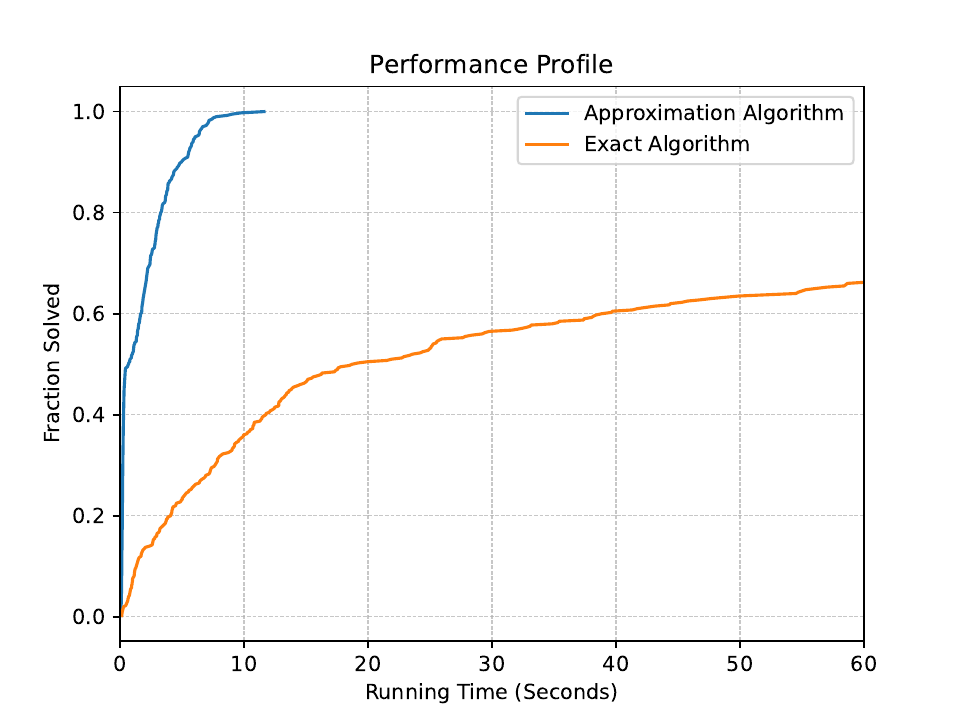}
  \caption{Performance Profile (Approximation vs. Exact Algorithm)}
  \label{fig:performance_profile}   
\end{figure}

Across all the test instances, \cref{alg:apx} consistently required significantly less computational time compared to the exact algorithm. As evident in \cref{fig:performance_profile}, \cref{alg:apx} solved all 400 instances in under 12 seconds. The exact algorithm solved only $66\%$ of all the instances in 60 seconds. We also observe that the exact algorithm has a very long tail, with $80\%$ of the instances solved in 174 seconds, $85\%$ solved in 263 seconds, $90\%$ in 411 seconds, $95\%$ in 922 seconds, and 100$\%$ in 1674 seconds (almost 28 minutes).

In contrast, we see that in $88.75\%$ of the instances, \cref{alg:apx} completed in less than 0.1 times the time taken by the exact algorithm, showing a substantial speedup. For larger problem instances where the run time of the exact algorithm exceeded 100 seconds, \cref{alg:apx} showed dramatic improvements, often completing the computation in less than 2 seconds. Furthermore, \cref{alg:apx} shows consistent efficiency, with an average run time of 1.78 seconds per instance and maintaining computational times close to or below 1 second for 57.75$\%$ of the instances. This predictability and speed make our approximation algorithm highly suitable for practical applications where time efficiency is crucial. 

\subsubsection{Solution Quality Analysis}
To assess the quality of the solutions obtained by \cref{alg:apx}, we measured the difference in the optimal objective function value between the approximate and exact algorithms, $\Delta\fx = \fx^\am - \fx^\om$.
Note that the coefficient of the leader's objective function, $\hx$ and $\dx$ are all integers. Since $\bfx$ and $\bfy$ are constrained to be integers too, the objective value of any feasible solution is an integer. 
Thus, the difference between the approximate and the exact solution is also necessarily an integer. 
The distribution of this difference $\Delta\fx$ over the 600 test instances is depicted in \cref{fig:quality_histogram}. 

\begin{figure}
  \centering
  \includegraphics[width=0.8\textwidth]{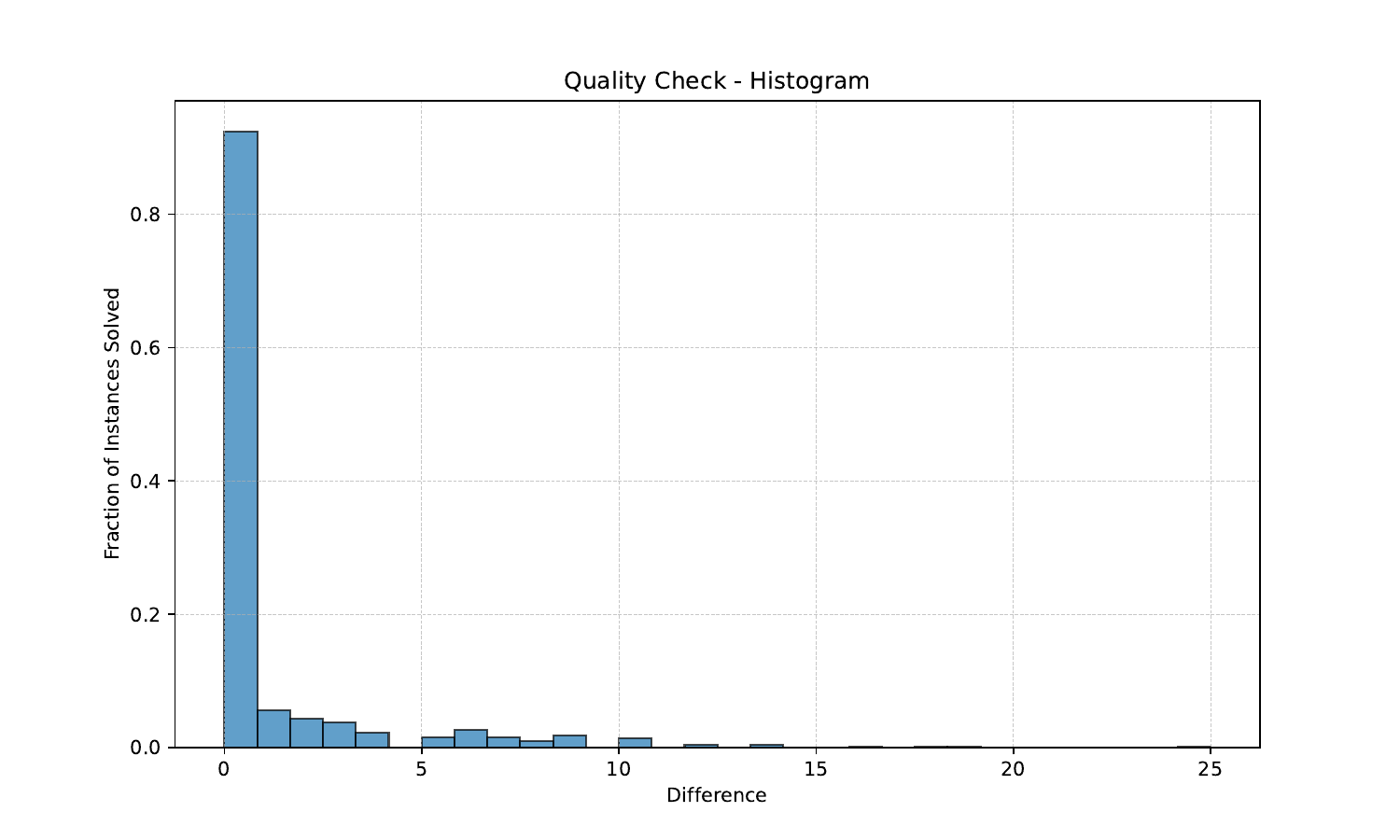}
  \caption{Histogram of $\Delta\fx$}
  \label{fig:quality_histogram}   
\end{figure}

\cref{fig:quality_histogram} reveals that the majority of the solutions from \cref{alg:apx} are close to the optimal solutions obtained by the brute-force exact algorithm. 
In $77\%$ of the instances, $\Delta\fx = 0$.
This means that in over three-fourth of all instances, \cref{alg:apx} solves the problem correctly.
$91\%$ of the instances have $\Delta\fx \le 5$, indicating that the deviation from the exact optimal objective function value is minimal. This further showcases the approximation algorithm's reliability in maintaining solution quality. Instances with higher $\Delta\fx$ values are relatively rare, with only $2.5\%$ of the instances having $\Delta\fx \ge 10$, emphasizing that the algorithm performs well in most cases. 
We see that \cref{alg:apx} is robust across a wide range of problem instances, indicating its suitability for diverse scenarios in practical settings. 

\bibliographystyle{plainnat}
\bibliography{refSmall,litRev}
\end{document}